\newcommand{\rrvert}{\vert}
\newcommand{\llvert}{\vert}
\newtheorem{theorem}{Theorem}
\newtheorem{lemma}{Lemma}
\theoremstyle{definition}
\newtheorem{remark}{Remark}
\begin{document}
\begin{frontmatter}

\title{Compositions of Poisson and Gamma processes}

%

\author{\inits{K.}\fnm{Khrystyna}\snm{Buchak}}\email{kristina.kobilich@gmail.com}
\author{\inits{L.}\fnm{Lyudmyla}\snm{Sakhno}\corref{cor1}}\email{lms@univ.kiev.ua}
\cortext[cor1]{Corresponding author.}

\address{Mechanics and Mathematics Faculty,\\
Taras Shevchenko National University of Kyiv\\
Volodymyrska 64/11, 01601 Kyiv, Ukraine}



\markboth{K. Buchak, L. Sakhno}{Compositions of Poisson and Gamma processes}


\begin{keywords}
\kwd{Time-change}
\kwd{Poisson process}
\kwd{Skellam process}
\kwd{compound Poisson-Gamma subordinator}
\kwd{inverse subordinator}
\end{keywords}
\begin{keywords}[2010]
\kwd{60G50}
\kwd{60G51}
\kwd{60G55}
\end{keywords}

\begin{abstract}
In the paper we study the models of time-changed Poisson and Skellam-type
processes, where the role of time is played by compound Poisson-Gamma
subordinators and their inverse (or first passage time) processes. We obtain
explicitly the probability distributions of considered time-chan\-ged
processes and discuss their properties.
\end{abstract}


\received{9 February 2017}
\revised{16 May 2017}
\accepted{2 June 2017}
\publishedonline{29 June 2017}
\end{frontmatter}

\section{Introduction}

Stochastic processes with random time and more general compositions of
pro\-cesses are quite popular topics of recent studies both in the
theory of
stochastic processes and in various applied areas. Specifically, in
financial mathematics, models with random clock (or time change) allow to
capture more realistically the relationship between calendar time and
financial markets activity. Models with random time appear in reliability
and queuing theory, biological, ecological and medical research, note also
that for solving some problems of statistical estimation sampling of a
stochastic process at random times or on the trajectory of another process
can be used. Some examples of applications are described, for example, in
\cite{D}.

In the present paper we study various compositions of Poisson and Gamma
processes. We only consider the cases when processes used for compositions
are independent.

The Poisson process directed by a Gamma process and, in reverse,
the Gamma process directed by a Poisson process one can encounter,
for example, in the book by W. Feller \cite{F}, where distributions
of these processes are presented as particular examples within the
general framework of directed processes.

Time-changed Poisson processes have been investigated extensively in the
literature. We mention, for example, the recent comprehensive study
undertaken in \cite{OT} concerned with the processes $N(H^{f}(t))$,
where $%
N(t)$ is a Poisson process and $H^{f}(t)$ is an arbitrary
subordinator with Laplace exponent $f$, independent of $N(t)$. The
particular emphasis has been made on the cases where
$f(u)=u^{\alpha}$, $\alpha\in(0,1)$ (space-fractional Poisson
process), $f(u)= ( u+\theta ) ^{\alpha}-\theta^{\alpha
}$, $\alpha\in(0,1)$,\ $\theta>0\ $(tempered Poisson process)
and $f(\lambda)=\log(1+\lambda)$ (negative binomial process),
in the last case we have the Poisson process with Gamma
subordinator.

The most intensively studied models of Poisson processes with
time-change\ are two fractional extensions of Poisson process,
namely, the space-fractional and the time-fractional Poisson
processes, obtained by choosing a stable subordinator or its
inverse process in the role of time correspondingly; the
literature devoted to these processes is rather voluminous, we
refer, e.g., to the recent papers \cite{GOS,OP,KNV} (and references therein), and to the paper \cite{LMSS},
where the correlation structure of the time-changed L\'{e}vy
processes has been investigated and the correlation of
time-fractional Poisson process was discussed among the variety of
other examples. The most recent results are concerned with
non-homogeneous fractional Poisson processes (see, e.g.
\cite{Leonenko2017} and references therein).

Interesting models of processes are based on the use of the difference of
two Poisson processes, so-called Skellam processes, and their
generalizations via time change. Investigation of these models can be found,
for example, in \cite{B-N}, and we also refer to the paper \cite{KLS}, where
fractional Skellam processes were introduced and studied.\looseness=1

In the present paper we study time-changed Poisson and Skellam
processes, where the role of time is played by compound
Poisson-Gamma subordinators and their inverse (or first passage
time) processes. Some motivation for our study is presented in
Remarks \ref{remark1} and \ref{remark2} in Section~\ref{sec:com P-G}.\looseness=0

We obtain explicitly the probability distributions of considered
time-changed processes and their first and second order moments.

In particular, for the case, where time-change is taken by means
of compound Poisson-exponential subordinator and its inverse
process, corresponding probability distributions of time-changed
Poisson and Skellam processes are presented in terms of
generalized Mittag-Leffler functions.

We also find the relation, in the form of differential equation,
between the
distribution of Poisson process, time-changed by Poisson-exponential
process, and the distribution of Poisson processes time-changed by inverse
Poisson-exponential process.

The paper is organized as follows. In Section~\ref{sec:prel} we recall definitions of
processes which will be considered in the paper, in Section~\ref{sec:com P-G} we
discuss the
main features of the compound Poisson-Gamma process $G_{N}(t)$. In
Section~\ref{sec:com}
we study Poisson and Skellam-type processes time-changed by the process
$%
G_{N}(t)$; in Section~\ref{section5} we investigate time-changed Poisson and
Skellam-type processes, where the role of time is played by the
inverse process for $G_{N}(t)$, we also discuss some properties of
the inverse processes. Appendix contains details of the derivation
of some results stated in Section~\ref{section5}.

\section{Preliminaries}
\label{sec:prel}

In this section we recall definitions of processes, which will be
considered in the paper (see, e.g. \cite{A,B}).

The Poisson process $N(t)$ with intensity parameter $\lambda>0$ is a
L\'{e}vy process with values in $N\cup\{0\}$ such that each $N(t)$ has
Poisson distribution with parameter $\lambda t$, that is,
\[
P\bigl\{N(t)=n\bigr\}=\frac{ ( \lambda t ) ^{n}}{n!}e^{-\lambda t};
\]
L\'{e}vy measure of $N(t)$ can be written in the form $\nu(du)=\lambda
\delta_{\{1\}}(u)du$, where $\delta_{\{1\}}(u)$ is the Dirac delta
centered at $u=1$, the corresponding Bern\v{s}tein function (or Laplace
exponent of $%
N(t)$) is $f(u)=\lambda ( 1-e^{-u} ), u>0$.

The Gamma process $G(t)$ with parameters $\alpha,\beta>0$ is a L\'{e}vy
process such that each $G(t)$ follows Gamma distribution $\varGamma(\alpha
t,\beta)$, that is, has the density
\[
h_{G(t)}(x)=\frac{\beta^{\alpha t}}{\varGamma(\alpha t)}x^{\alpha
t-1}e^{-\beta x},\quad x\geq0;
\]
The L\'{e}vy measure of $G(t)$ is $\nu(du)=\alpha u^{-1}e^{-\beta
u}du$ and the corresponding Bern\v{s}tein function is
\[
f(u)=\alpha\log \biggl( 1+\frac{u}{\beta} \biggr) .
\]
%

The Skellam process is defined as
\[
S(t)=N_{1}(t)-N_{2}(t),\quad t\geq0,
\]
where $N_{1}(t),t\geq0$, and $N_{2}(t),t\geq0$, are two independent Poisson
processes with intensities $\lambda_{{1}}>0$ and $\lambda_{2}>0$,
respectively.

The probability mass function of $S(t)$ is given by
\begin{align}
&s_{k}(t)=P \bigl( S(t)=k \bigr) =e^{-t ( \lambda_{{1}}+\lambda
_{2} ) } \biggl(
\frac{\lambda_{{1}}}{\lambda_{2}} \biggr) ^{k/2}I_{|k|} ( 2t\sqrt{
\lambda_{{1}}\lambda_{2}} ) , \notag\\
&\quad k\in\Bbb{Z}=\{0,\pm1,\pm2,\ldots\},\label{skellamdistr}
\end{align}
where $I_{k}$ is the modified Bessel function of the first kind
\cite{Sn}:
\begin{equation}
I_{k}(z)=\sum_{n=0}^{\infty}
\frac{ ( z/2 ) ^{2n+k}}{n!(n+k)!}. \label{Bessel}
\end{equation}
The Skellam process is a L\'{e}vy process, its L\'{e}vy measure is
the linear combination of two Dirac measures: $\nu(du)=\lambda
_{1}\delta_{\{1\}}(u)du+\lambda_{2}\delta_{\{-1\}}(du)$, 
the corresponding Bern\v{s}tein function is
\[
f_{S}(\theta)=\int_{-\infty}^{\infty} \bigl(
1-e^{-\theta y} \bigr) \nu (dy)=\lambda_{1} \bigl(
1-e^{-\theta} \bigr) +\lambda_{2} \bigl( 1-e^{\theta}
\bigr) ,
\]
and the moment generating function is given by
\[
\mathsf{E}e^{\theta S(t)}=e^{-t ( \lambda_{{1}}+\lambda
_{2}-\lambda_{{%
1}}e^{\theta}-\lambda_{2}e^{-\theta} ) },\quad \theta\in
\Bbb{R}%
.
\]

Skellam processes are considered, for example, in \cite{B-N}, the Skellam
distribution had been introduced and studied in \cite{Sk} and \cite{Ir}.

We will consider Skellam processes with time change
\[
S_{_{I}}(t)=S\bigl(X(t)\bigr)=N_{1}\bigl(X(t)
\bigr)-N_{2}\bigl(X(t)\bigr),
\]
where $X(t)$ is a subordinator independent of $N_{1}(t)$ and
$N_{2}(t)$, and
will call such processes 
time-changed Skellam processes of type I. We
will also consider the processes of the form
\[
S_{\mathit{II}}(t)=N_{1}\bigl(X_{1}(t)
\bigr)-N_{2}\bigl(X_{2}(t)\bigr),
\]
where $N_{1}(t)$, $N_{2}(t)$ are two independent Poisson processes with
intensities $\lambda_{1}>0$ and $\lambda_{2}>0$, and $X_{1}(t)$, $X_{2}(t)$
are two independent copies of a subordinator $X(t)$, which are also
independent of $N_{1}(t)$ and $N_{2}(t)$, and we will call the process $
S_{\mathit{II}}(t)$ a time-changed Skellam process of type II.

To represent distributions and other characteristics of processes considered
in the next sections, we will use some special functions, besides the
modified Bessel function introduced above. Namely, we will use the Wright
function
\begin{equation}
\varPhi(\rho,\delta,z)=\sum_{k=0}^{\infty}
\frac{z^{k}}{k!\varGamma(\rho
k+\delta)},\quad z\in C,\ \rho\in(-1,0)\cup(0,\infty ),\ \delta\in C, \label{Wr}
\end{equation}
for $\delta=0$, \eqref{Wr} is simplified as follows:
\begin{equation}
\varPhi(\rho, 0 ,z)=\sum_{k=1}^{\infty}
\frac{z^{k}}{k!\varGamma(\rho
k)}; \label{Wr0}
\end{equation}
the two-parameter generalized Mittag-Leffler function
\begin{equation}
\mathcal{E}_{\rho,\delta}(z)=\sum_{k=0}^{\infty}
\frac{z^{k}}{\varGamma
(\rho
k+\delta)},\quad z\in C,\ \rho\in(0,\infty), \delta \in (0,
\infty); \label{ML2}
\end{equation}
and the three-parameter generalized Mittag-Leffler function
\begin{equation}
\mathcal{E}_{\rho,\delta}^{\gamma}(z)=\sum_{k=0}^{\infty}
\frac
{\varGamma
(\gamma+k)}{\varGamma(\gamma)}\frac{z^{k}}{k!\varGamma(\rho k+\delta
)},\quad z\in C,\ \rho,\delta,\gamma\in
C, \label{ML3}
\end{equation}
with $\text{ Re}(\rho)>0,\text{Re}(\delta)>0,\text{Re}(\gamma
)>0$ (see, e.g., \cite{Haubold} for definitions and properties of
these functions).

\section{Compound Poisson-Gamma process}
\label{sec:com P-G} The first example of compositions of Poisson
and Gamma processes which we consider in the paper is the compound
Poisson-Gamma process. This is a well known process, however here
we would like to focus on some of its important features.

Let $N(t)$ be a Poisson process and $ \{ G_{n},n\geq1 \}
$ be a sequence of i.i.d. Gamma random variables independent of
$N(t)$. Then compound Poisson process with Gamma distributed jumps
is defined as
\[
Y(t)=\sum_{n=1}^{N(t)}G_{n},\quad
t>0,\quad  Y(0)=0
\]
(in the above definition it is meant that
$\sum_{n=1}^{0}\overset{\mathrm{def}}=0$). This process can be also
represented as $Y(t)=G ( N(t) ) $, that is, as a
time-changed Gamma process, where the role of time is played by
Poisson process. Let us denote this process by $G_{N}(t)$:
\[
G_{N}(t)=G \bigl( N(t) \bigr) .
\]
Let $N(t)$ and $G(t)$ have parameters $\lambda$ and $(\alpha
,\beta)$ correspondingly. The process $G_{N}(t)$\ is a L\'{e}vy
process with Laplace exponent (or Bern\v{s}tein function) of the
form
\[
f(u)=\lambda\beta^{\alpha} \bigl( \beta^{-\alpha}-(\beta
+u)^{-\alpha
}{} \bigr) ,\quad\lambda>0,\ \alpha>0,\ \beta>0,
\]
and the corresponding L\'{e}vy measure is
\begin{equation}
\nu(du)=\lambda\beta^{\alpha} \bigl( \varGamma(\alpha) \bigr)
^{-1}u^{\alpha-1}e^{-\beta u}du,\quad\lambda>0,\
\alpha >0,\ \beta>0. \label{measurePG}
\end{equation}

The transition probability measure of the process $G_{N}(t)$ can be written
in the closed form:
\begin{align}
P \bigl\{ G_{N}(t)\in ds \bigr\} &=e^{-\lambda t}\delta
_{\{0\}}(ds)+\sum_{n=1}^{\infty}e^{-\lambda t}
\frac{ ( \lambda
t\beta
^{\alpha} ) ^{n}}{n!\varGamma(\alpha n)}s^{\alpha n-1}e^{-\beta s}ds \label{probGN}
\\
&=e^{-\lambda t}\delta_{\{0\}}(ds)+e^{-\lambda t-\beta s}\frac
{1}{s}
\varPhi \bigl( \alpha,0,\lambda t(\beta s)^{\alpha} \bigr) ds,
\nonumber
\end{align}
therefore, probability law of $G_{N}(t)$ has atom $e^{-\lambda t}$
at zero, that is, has a discrete part $P \{ G_{N}(t)=0 \}
=e^{-\lambda t}$, and the density of the absolutely continuous
part can be expressed in terms of the Wright function.

In particular, when $\alpha=n$, $n\in N$, we have a
Poisson--Erlang process, which we will denote by $G_{N}^{(n)}(t)$
and for $\alpha=1$, we have a Poisson process with exponentially
distributed jumps. We will denote this last process by $E_{N}(t)$.
Its L\'{e}vy measure $\nu(du)=\lambda\beta e^{-\beta u}du$ and
Laplace exponent is
\[
f(u)=\lambda\frac{u}{\beta+u}{}.
\]

The transition probability measure of $E_{N}(t)$ is given by
\begin{align*}
P \bigl\{ E_{N}(t)\in ds \bigr\} &=e^{-\lambda t}\delta
_{\{0\}}(ds)+e^{-\lambda t-\beta s}\frac{1}{s}\varPhi ( 1,0,\lambda t
\beta s ) ds
\\
&=e^{-\lambda t}\delta_{\{0\}}(ds)+e^{-\lambda t-\beta s}
\frac{\sqrt{
\lambda t\beta s}}{s}I_{1} ( 2\sqrt{\lambda t\beta s} )ds .
\end{align*}

\begin{remark}\label{remark1}
Measures (\ref{measurePG}) belong to the class of L\'{e}vy
measures
\begin{equation}
\nu(du)=cu^{-a-1}e^{-bu}du,\quad c>0,\ a<1,\ b>0.
\label{measureGen}
\end{equation}
Note that:
\begin{enumerate}
\item[(i)] for the range $a\in(0,1)$ and $b>0$ we obtain the
tempered Poisson processes,

\item[(ii)] the limiting case when $a\in(0,1)$, $b=0$
corresponds to stable subordinators,

\item[(iii)] the case $a=0$, $b>0$ corresponds to Gamma
subordinators,

\item[(iv)] for $a<0$ we have compound Poisson-Gamma
subordinators.
\end{enumerate}

Probability distributions of the above subordinators can be
written in the closed form in the case (iii); in the case (i)
for $\alpha=1/2$, when we have the inverse Gaussian subordinators;
and in the case (iv).

In the paper \cite{OT} the deep and detailed investigation was
performed for the time-changed Poisson processes where the role of
time is played by the subordinators from the above cases
(i)--(iii) (and as we have already pointed out in the introduction, the
most studied in the literature is the case (ii)). The mentioned
paper deals actually with the general
time-changed processes of the form $N(H^{f}(t))$, where $%
N(t)$ is a Poisson process and $H^{f}(t)$ is an arbitrary
subordinator with Laplace exponent $f$, independent of $N(t)$.
This general construction falls into the framework of Bochner
subordination (see, e.g. book by Sato \cite{Sato}) and can be
studied by means of different approaches.

With the present paper we intend to complement the study
undertaken in \cite{OT} with one more example. We consider
time-change by means of subordinators corresponding to the above
case (iv) and, therefore, subordination related to the measures
\eqref{measureGen} will be covered for the all range of
parameters. We must also admit that the attractive feature of
these processes is the closed form of their distributions which
allows to perform the exact calculations for characteristics of
corresponding time-changed processes.

We also study Skellam processes with time-change by means of
compound Pois\-son-Gamma subordinators, in this part our results are
close to the corresponding results of the paper \cite{KLS}.

In our paper we develop (following \cite{OT,KLS} among
others) an approach for studying time-changed Poisson process via
investigation of their distributional properties, with the use of
the form and properties of distributions of the processes
involved.

It would be also interesting to study the above mentioned
processes within the framework of Bochner subordination via
semigroup approach. We address this topic for future research, as
well as the study of other characteristics of the processes
considered in the present paper and their comparison with related
results existing in the literature.

Note that in our exposition for convenience we will write L\'{e}vy
measures \eqref{measureGen} for the case of compound Poisson-Gamma
subordinators in the form (\ref{measurePG}) with $\alpha>0$ and
transparent meaning of parameters.
\end{remark}

\begin{remark}\label{remark2}
It is well known that the composition of two independent stable
subordinators is again a stable subordinator. If $S_{i}(t)$,
$i=1,2$, are two independent
stable subordinators with Laplace exponents $c_{i}u^{a_{i}}$, $i=1,2$,
then $%
S_{1}(S_{2}(t))$ is the stable\vadjust{\goodbreak} subordinator with index $a_{1}a_{2}$, since
its Laplace exponent is $c_{2} ( c_{1} ) ^{a_{2}}u^{a_{1}a_{2}}$.
More generally, iterated composition of stable subordinators $%
S_{1},\ldots,S_{k} $ with indices $a_{1},\ldots,a_{k}$ is the stable subordinator
with index $\prod_{i=1}^{k}a_{i}$.

In the paper \cite{KS} it was shown that one specific subclass of
Poisson-Gamma subordinators has a property similar to the property of stable
subordinators described above, that is, if two processes belong to this
class, so does their composition. Namely, this is the class of compound
Poisson processes with exponentially distributed jumps and parameters $
\lambda=\beta=\frac{1}{a}$, therefore, the L\'{e}vy measure is of
the form
$\nu(du)=\frac{1}{a^{2}}e^{-u/a}du$ and the corresponding Bern\v
{s}tein function is
$\tilde{f}_{a}(u)=\frac{u}{1+au}$. Denote such processes by $\widetilde
{E}%
_{N}^{a}(t)$. Then, as can be easily checked (see also \cite{KS}) the
composition of two independent processes $\widetilde{E}_{N}^{a_{1}}(%
\widetilde{E}_{N}^{a_{2}}(t))$ has Laplace exponent $\tilde{f}%
_{a_{1}+a_{2}}(u)=\frac{u}{1+(a_{1}+a_{2})u}$, since the functions
\[
\tilde{f}_{a}(u)=\frac{u}{1+au}
\]
have the property
\[
\tilde{f}_{a}\bigl(\tilde{f}_{b}(\lambda)\bigr)=
\tilde{f}_{a+b}(\lambda).
\]
Therefore, the process $\widetilde{E}_{N}^{a_{1}}(\widetilde{E}%
_{N}^{a_{2}}(t))$ coincides, in distribution, with $\widetilde
{E}_{N}^{a}(t)$, with a parameter $a=a_{1}+a_{2}$. More generally, iterated
composition of $%
n$ independent processes
\[
\widetilde{E}_{N}^{a_{1}}\bigl(\widetilde{E}_{N}^{a_{2}}
\bigl(\ldots\widetilde{E}%
_{N}^{a_{n}}(t)\ldots\bigr)\bigr)
\]
is equal in distribution to the process $\widetilde{E}_{N}^{a}(t)$ with
$%
a=\sum_{i=1}^{n}a_{i}$, and the $n$-th iterated composition as
$n\rightarrow
\infty$ results in the process $\widetilde{E}_{N}^{a}(t)$ with $%
a=\sum_{i=1}^{\infty}a_{i}$, provided that $0<a<\infty$. In particular,
subordinator $\widetilde{E}_{N}^{1}(t)$ with Laplace exponent
$f(u)=\frac{u}{%
1+u}$ can be represented as infinite composition of independent
subordinators $\widetilde{E}_{N}^{1/2^{n}}$:
\[
\widetilde{E}_{N}^{1}(t)\stackrel{d} {=}\widetilde
{E}_{N}^{1/2}\bigl(\widetilde{E}%
_{N}^{1/2^{2}}
\bigl(\ldots\widetilde{E}_{N}^{1/2^{n}}(\ldots)\bigr)\bigr).
\]
This interesting feature of the processes
$\widetilde{E}_{N}^{a}(t)$ deserves further investigation.
\end{remark}

\section{Compound Poisson-Gamma process as time change}
\label{sec:com}

Let $N_{1}(t)$ be the Poisson process with intensity $\lambda_{1}$.
Consider the time-changed process $N_{1}(G_{N}(t))=N_{1}(G(N(t)))$,
where $%
G_{N}(t)$ is compound Poisson-Gamma process (with parameters $\lambda
,\alpha,\beta$) independent of $N_{1}(t)$.

\begin{theorem}\label{theorem1}
Probability mass function of the process $X(t)=N_{1}(G_{N}(t))$ is
given by
\begin{equation}
p_{k}(t)=P\bigl(X(t)=k\bigr)=\frac{e^{-\lambda t}}{k!}\frac{\lambda
_{1}^{k}}{(\lambda_{1}+\beta)^{k}}\sum
_{n=1}^{\infty
}\frac{ ( \lambda t\beta^{\alpha} ) ^{n}\varGamma(\alpha
n+k)}{(\lambda_{1}+\beta)^{\alpha n}n!\varGamma(\alpha n)} \quad\mbox{
\text{for}}\  k\ge1 \label{pkNGN},
\end{equation}
and
\begin{equation}
p_{0}(t)=\exp \biggl\{-\lambda t \biggl(1-\frac{\beta^{\alpha
}}{(\lambda_{1}+\beta)^{\alpha}} \biggr)
\biggr\}\label{p0NGN}.
\end{equation}
The probabilities $p_{k}(t)$, $k\geq0$, satisfy the following
system of difference-differential equations:\vadjust{\goodbreak}
\[
\frac{d}{dt}p_{k}(t)= \biggl( \frac{\lambda\beta^{\alpha}}{(\lambda
_{1}+\beta)^{\alpha}}-\lambda
\biggr) p_{k}(t)+\frac{\lambda\beta
^{\alpha}}{(\lambda_{1}+\beta)^{\alpha}}\sum_{m=1}^{k}
\frac{\lambda
_{1}^{m}}{ ( \lambda_{1}+\beta ) ^{m}}\frac{\varGamma
(m+\alpha)}{%
m!\varGamma(\alpha)}p_{k-m}(t).
\]
\end{theorem}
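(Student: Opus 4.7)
I would prove the two parts separately, treating the probability mass function first and then deriving the system of difference--differential equations.

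\emph{Part 1: the probability mass function.} Since $N_{1}$ is independent of $G_{N}$, I would condition on the time change:
\[
p_{k}(t)=\int_{0}^{\infty}P\bigl(N_{1}(s)=k\bigr)\,P\bigl\{G_{N}(t)\in ds\bigr\}
=\int_{0}^{\infty}\frac{(\lambda_{1}s)^{k}}{k!}e^{-\lambda_{1}s}\,P\bigl\{G_{N}(t)\in ds\bigr\}.
\]
Now I would substitute the explicit transition law \eqref{probGN}, which splits $P\{G_{N}(t)\in ds\}$ into an atom $e^{-\lambda t}\delta_{\{0\}}$ and an absolutely continuous part built from the series $\sum_{n\ge1} e^{-\lambda t}\,(\lambda t\beta^{\alpha})^{n}/(n!\,\varGamma(\alpha n))\cdot s^{\alpha n-1}e^{-\beta s}$. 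For $k\ge 1$ the atom contributes zero, and each term in the series requires only the Gamma integral
\[
\int_{0}^{\infty}s^{k+\alpha n-1}e^{-(\lambda_{1}+\beta)s}\,ds=\frac{\varGamma(\alpha n+k)}{(\lambda_{1}+\beta)^{\alpha n+k}},
\]
which assembled with the prefactors yields exactly \eqref{pkNGN}. For $k=0$ the atom gives $e^{-\lambda t}$, and the same integral (with $k=0$) makes the $\varGamma(\alpha n)$ cancel, so the series collapses to $e^{-\lambda t}(\exp\{\lambda t\beta^{\alpha}/(\lambda_{1}+\beta)^{\alpha}\}-1)$; summing the two pieces gives \eqref{p0NGN}.

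\emph{Part 2: the difference--differential system.} I would pass to the probability generating function $G(u,t)=\sum_{k\ge0}u^{k}p_{k}(t)=\mathsf{E}u^{X(t)}$. Conditioning on $G_{N}(t)$ and using $\mathsf{E}u^{N_{1}(s)}=e^{-\lambda_{1}(1-u)s}$ together with the Laplace exponent $f(s)=\lambda-\lambda\beta^{\alpha}(\beta+s)^{-\alpha}$ of $G_{N}$ given in Section~\ref{sec:com P-G}, I obtain the compact identity
\[
G(u,t)=\mathsf{E}e^{-\lambda_{1}(1-u)G_{N}(t)}=\exp\bigl\{-tf\bigl(\lambda_{1}(1-u)\bigr)\bigr\}.
\]
Differentiation in $t$ then gives
\[
\frac{\partial G}{\partial t}(u,t)=\left[-\lambda+\frac{\lambda\beta^{\alpha}}{(\beta+\lambda_{1}(1-u))^{\alpha}}\right]G(u,t),
\]
and I would expand the $u$-dependent factor by the binomial/negative-binomial series
\[
\frac{\lambda\beta^{\alpha}}{(\beta+\lambda_{1}(1-u))^{\alpha}}=\frac{\lambda\beta^{\alpha}}{(\lambda_{1}+\beta)^{\alpha}}\sum_{m=0}^{\infty}\frac{\varGamma(m+\alpha)}{m!\,\varGamma(\alpha)}\frac{\lambda_{1}^{m}}{(\lambda_{1}+\beta)^{m}}u^{m}.
\]
Reading off the coefficient of $u^{k}$ on both sides — the $m=0$ term merging with $-\lambda$ to give $(\lambda\beta^{\alpha}/(\lambda_{1}+\beta)^{\alpha}-\lambda)p_{k}(t)$, and the terms $m=1,\ldots,k$ producing the convolution — delivers the stated equation.

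\emph{Where the difficulty lies.} Part 1 is a bookkeeping computation once \eqref{probGN} is in hand. The genuine insight is in Part 2: recognising that the PGF collapses to $\exp\{-tf(\lambda_{1}(1-u))\}$ reduces the problem to expanding a single explicit function of $u$, and the convolution structure in $k$ (i.e.\ the sum $\sum_{m=1}^{k}$ with coefficients $\lambda_{1}^{m}\varGamma(m+\alpha)/(m!\,\varGamma(\alpha)(\lambda_{1}+\beta)^{m})$) is then forced by the Newton binomial expansion. The only mild care point is justifying term-by-term extraction of coefficients of $u^{k}$, which is immediate from the absolute convergence of the series on $|u|\le 1$.
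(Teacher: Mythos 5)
Your proposal is correct. Part 1 is essentially identical to the paper's own proof: conditioning on $G_{N}(t)$, splitting the law \eqref{probGN} into the atom at zero and the absolutely continuous part, and evaluating the Gamma integral $\int_{0}^{\infty}s^{k+\alpha n-1}e^{-(\lambda_{1}+\beta)s}ds$ term by term, with the $k=0$ series collapsing to \eqref{p0NGN} exactly as you describe. Part 2 is where you genuinely depart from the paper: the authors do not prove the difference--differential system at all, but simply invoke the general governing equation of Theorem 2.1 in \cite{OT} for $P(N(H^{f}(t))=k)$ with an arbitrary subordinator $H^{f}$. Your route makes this step self-contained: the identity $G(u,t)=\mathsf{E}u^{X(t)}=\exp\{-tf(\lambda_{1}(1-u))\}$ (which the paper only quotes in Remark \ref{remark3}, again via \cite{OT}), differentiation in $t$, and the expansion
\begin{equation*}
\bigl(\beta+\lambda_{1}(1-u)\bigr)^{-\alpha}=(\lambda_{1}+\beta)^{-\alpha}\sum_{m=0}^{\infty}\frac{\varGamma(m+\alpha)}{m!\,\varGamma(\alpha)}\Bigl(\frac{\lambda_{1}u}{\lambda_{1}+\beta}\Bigr)^{m},
\end{equation*}
valid since $\lambda_{1}|u|/(\lambda_{1}+\beta)<1$ for $|u|\le1$, reproduce exactly the stated coefficients, the $m=0$ term merging with $-\lambda$ as you say. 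What your approach buys is a short direct proof specialized to the compound Poisson--Gamma subordinator, at the cost of one routine technicality you only partially address: besides extracting coefficients of $u^{k}$, you should justify that $\frac{\partial}{\partial t}\sum_{k}u^{k}p_{k}(t)=\sum_{k}u^{k}\frac{d}{dt}p_{k}(t)$, e.g.\ via $p_{k}(t)=\frac{1}{2\pi i}\oint G(u,t)u^{-k-1}du$ and differentiation under the integral sign; this is standard and does not affect correctness.
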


 In the case when $a=1$, that is, when the process $G_{N}(t)$
becomes $E_{N}(t)$, the compound Poisson process with
exponentially distributed jumps, probabilities $p_{k}(t)$, $k\geq
0$, can be represented via the generalized Mittag-Leffler function
as stated in the next theorem.

\begin{theorem}\label{theorem2}
Let $X(t)=N_{1}(E_{N}(t))$. Then for $k\ge1$
\begin{align}
p_{k}^{E}(t)&=P\bigl(X(t)=k\bigr)=e^{-\lambda t}
\frac{\lambda_{1}^{k}\lambda
\beta t}{%
(\lambda_{1}+\beta)^{k+1}}\mathcal{E}_{1,2}^{k+1} \biggl(
\frac{\lambda\beta t}{\lambda_{1}+\beta} \biggr) \label{pkNEN};\\
p_{0}(t)&=\exp \biggl\{- \frac{\lambda\lambda_1 t }{\lambda
_{1}+\beta} \biggr\}\label{p0NEN},
\end{align}
and the probabilities $p_{k}(t)$, $k\ge0$, satisfy the following
equation:
\begin{equation}
\frac{d}{dt}p_{k}^{E}(t)=-\lambda
\frac{\lambda_{1}}{\lambda
_{1}+\beta}%
p_{k}^{E}(t)+\frac{\lambda\beta}{\lambda_{1}+\beta}
\sum_{m=1}^{k} \biggl( \frac{\lambda_{1}}{\lambda_{1}+\beta}
\biggr) ^{m}p_{k-m}^{E}(t). \label{dpkNEN}
\end{equation}
\end{theorem}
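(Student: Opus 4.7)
\medskip

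\noindent\emph{Proof plan.} Theorem~\ref{theorem2} is the specialization $\alpha=1$ of Theorem~\ref{theorem1}, combined with the recognition of the resulting power series as a three-parameter Mittag-Leffler function. The plan is therefore to substitute $\alpha=1$ into formulas \eqref{pkNGN}, \eqref{p0NGN} and the difference-differential equation of Theorem~\ref{theorem1}, and then rewrite the sum in closed form.

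First, for $k\geq 1$, setting $\alpha=1$ in \eqref{pkNGN} gives $\Gamma(\alpha n)=\Gamma(n)=(n-1)!$ and $\Gamma(\alpha n+k)=\Gamma(n+k)$, so
\[
p_k^E(t)=\frac{e^{-\lambda t}}{k!}\frac{\lambda_1^k}{(\lambda_1+\beta)^k}\sum_{n=1}^{\infty}\frac{z^n\,\Gamma(n+k)}{n!\,(n-1)!},\qquad z=\frac{\lambda\beta t}{\lambda_1+\beta}.
\]
Next, I would compare this with \eqref{ML3} evaluated at $\rho=1$, $\delta=2$, $\gamma=k+1$: shifting the summation index by one gives
\[
z\cdot\mathcal{E}_{1,2}^{k+1}(z)=\sum_{n=1}^{\infty}\frac{\Gamma(k+n)}{\Gamma(k+1)\,(n-1)!\,n!}\,z^{n},
\]
so that the inner sum equals $k!\,z\,\mathcal{E}_{1,2}^{k+1}(z)$. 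Substituting back and cancelling $k!$ yields exactly \eqref{pkNEN}. The formula \eqref{p0NEN} for $p_0(t)$ follows immediately from \eqref{p0NGN} by substituting $\alpha=1$ and simplifying $1-\beta/(\lambda_1+\beta)=\lambda_1/(\lambda_1+\beta)$.

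For the equation \eqref{dpkNEN}, I would set $\alpha=1$ in the difference-differential equation of Theorem~\ref{theorem1}. Then $\Gamma(m+\alpha)/(m!\,\Gamma(\alpha))=\Gamma(m+1)/m!=1$, and the coefficient of $p_k(t)$ simplifies as
\[
\frac{\lambda\beta}{\lambda_1+\beta}-\lambda=-\frac{\lambda\lambda_1}{\lambda_1+\beta},
\]
giving exactly \eqref{dpkNEN}.

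No step presents a genuine obstacle, since Theorem~\ref{theorem1} has already done all of the probabilistic work; the only point that requires care is the index bookkeeping in identifying the series with $\mathcal{E}_{1,2}^{k+1}$, in particular the shift $n\mapsto n-1$ that accounts for the factor $z$ out front and for the fact that the sum in \eqref{pkNGN} starts at $n=1$ whereas the Mittag-Leffler series \eqref{ML3} starts at $k=0$.
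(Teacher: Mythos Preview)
Your proposal is correct and follows exactly the approach taken in the paper: the authors' proof of Theorem~\ref{theorem2} consists of the single sentence that its statements are obtained as consequences of the corresponding statements of Theorem~\ref{theorem1} using the expansion~\eqref{ML3} of the three-parameter Mittag-Leffler function. Your write-up simply makes explicit the index shift and the simplifications that the paper leaves to the reader.
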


\begin{proof}[Proof of Theorem \ref{theorem1}] The probability mass
function of the process $X(t)=\break N_{1}(G_{N}(t))$ can be obtained by
standard conditioning arguments (see, e.g., the general result for
subordinated L\'{e}vy processes in \cite{Sato}, Theorem 30.1).

For $k\ge1$ we obtain:
\begin{align*}
p_{k}(t) &=P\bigl(N_{1}\bigl(G_{N}(t)\bigr)=k
\bigr)=\int_{0}^{\infty
}P\bigl(N_{1}(s)=k
\bigr)P\bigl(G_{N}(t)\in ds\bigr)
\\
&=\int_{0}^{\infty}e^{-\lambda_{1}s}\frac{ ( \lambda_{1}s
) ^{k}%
}{k!}
\biggl\{ e^{-\lambda t}\delta_{\{0\}}(ds)+e^{-\lambda t-\beta
s}
\frac{1%
}{s}\varPhi \bigl( \alpha,0,\lambda t(\beta s)^{\alpha} \bigr)
\biggr\} ds
\\
&=\frac{e^{-\lambda t}\lambda_{1}^{k}}{k!}\int_{0}^{\infty
}e^{-\lambda
_{1}s}s^{k}e^{-\beta s}
\frac{1}{s}\varPhi \bigl( \alpha,0,\lambda t(\beta s)^{\alpha} \bigr)
ds
\\
&=\frac{e^{-\lambda t}\lambda_{1}^{k}}{k!}\sum_{n=1}^{\infty}
\frac
{ (
\lambda t\beta^{\alpha} ) ^{n}}{n!\varGamma(\alpha n)}\int_{0}^{\infty
}e^{-(\lambda_{1}+\beta)s}s^{k+\alpha n-1}ds
\\
&=\frac{e^{-\lambda t}}{k!}\frac{\lambda_{1}^{k}}{(\lambda
_{1}+\beta)^{k}%
}\sum_{n=1}^{\infty}
\frac{ ( \lambda t\beta^{\alpha} )
^{n}\varGamma(\alpha n+k)}{(\lambda_{1}+\beta)^{\alpha n}n!\varGamma
(\alpha n)}%
.
\end{align*}

For $k=0$ we have:
\begin{align*}
p_{0}(t)&= e^{-\lambda t}+e^{-\lambda t}\sum
_{n=1}^{\infty} \frac{ (\lambda t \beta^{\alpha} )^n \varGamma(\alpha n)}{
(\lambda_1+\beta )^{\alpha n} n!\varGamma(\alpha n)}
\\
&=e^{-\lambda t}\sum_{n=0}^{\infty}
\frac{ (\lambda t
\beta^{\alpha} )^n}{ (\lambda_1+\beta )^{\alpha n}
n!}=e^{-\lambda t} e^{\frac{\lambda t
\beta^{\alpha}}{ (\lambda_1+\beta )^{\alpha}}}
\\
&= \exp \biggl\{-\lambda t \biggl(1-\frac{\beta^{\alpha}}{(\lambda
_{1}+\beta)^{\alpha}} \biggr) \biggr\}.
\end{align*}

The governing equation for the probabilities
$p_{k}(t)=P(N_{1}(G_{N}(t))=k)$ follows as a particular case of
the general equation presented in Theorem 2.1 \cite{OT} for
probabilities $P(N(H(t))=k)$, where $H(t)$ is an arbitrary
subordinator.
\end{proof}

\begin{proof}[Proof of Theorem \ref{theorem2}] 
We obtain statements of Theorem \ref{theorem2} as
consequences of corresponding statements of Theorem \ref{theorem1}, using the
expansion (%
\ref{ML3}) for three-parameter generalized Mittag-Leffler function.
\end{proof}

\begin{remark}\label{remark3}
Moments of the process $N_{1}(G_{N}(t))$ can be calculated, for
example, from the moment generating function which is given by:
\begin{equation}
\mathsf{E}e^{\theta N_{1}(G_{N}(t))}=e^{-t\lambda ( 1-\beta
^{\alpha
}(\beta+\lambda_{1}(1-e^{\theta}))^{-\alpha} ) },\text{ \ } \label{mgNGN}
\end{equation}
for $\theta\in\Bbb{R}$ such that $\beta+\lambda_{1}(1-e^{\theta
})\neq
0$. We have, in particular,
\begin{equation*}
\mathsf{E}N_{1}\bigl(G_{N}(t)\bigr)=\lambda_{1}
\lambda\alpha\beta ^{-1}t,\qquad \mathsf{Var}N_{1}\bigl(G_{N}(t)
\bigr)=\lambda_{1}\lambda\alpha\beta ^{-2}\bigl((1+\alpha)
\lambda_{1}+\beta\bigr)t.
\end{equation*}
Expressions for probabilities $p_{k}(t)$ and expressions for
moments were also calculated in \cite{KS} using the probability
generating function of the process $N_{1}(G_{N}(t))$. In \cite{KS}
the covariance function was also obtained:
\begin{equation}
\mathsf{Cov}\bigl(N_{1}\bigl(G_{N}(t)\bigr),N_{1}
\bigl(G_{N}(s)\bigr)\bigr)=\lambda_{1}\lambda\alpha
\beta^{-2}\bigl((1+\alpha)\lambda_{1}+\beta\bigr)\min
(t,s).\label{covGN}
\end{equation}

The very detailed study of time-changed processes $N(H^{f}(t))$ with $%
H^{f}(t)$ being an arbitrary subordinator, independent of Poisson
process $%
N(t)$, with Laplace exponent $f(u)$, is presented in \cite{OT}. In
particular, it was shown therein that the probability generating function
can be written in the form $G(u,t)=e^{-tf(\lambda(1-u))}$, where
$\lambda$
is the parameter of the process $N(t)$.

Note also that in order to compute the first two moments and
covariance function of time-changed L\'{e}vy processes the
following result, stated in \cite{LMSS} as Theorem 2.1, can be
used:

If $X(t)$ is a homogeneous L\'{e}vy process with $X(0)=0$, $Y(t)$ is a
nondecreasing process independent of $X(t)$ and $Z(t)=X(Y(t))$, then
\begin{equation}
\mathsf{E}Z(t)=\mathsf{E}X(1)U(t), \label{expComp}
\end{equation}
provided that $\mathsf{E}X(t)$ and $U(t)=\mathsf{E}Y(t)$ exist;

and if $X(t)$ and $Y(t)$ have finite second moments, then
\begin{align}
\mathsf{E} \bigl[ Z(t) \bigr] ^{2}&=\mathsf{Var}X(1)U(t)- \bigl[
\mathsf {E}X(1)%
 \bigr] ^{2}\mathsf{E} \bigl[ Y(t) \bigr]
^{2}, \label{mom2Comp}\\
\mathsf{Var}Z(t)&= \bigl[ \mathsf{E}X(1) \bigr] ^{2}\mathsf {Var}Y(t)+
\mathsf{%
Var}X(1)U(t), \label{VarComp}\\
\mathsf{Cov}\bigl(Z(t),Z(s)\bigr)&= \bigl[ \mathsf{E}X(1) \bigr] ^{2}
\mathsf{Cov}%
\bigl(Y(t),Y(s)\bigr)+\mathsf{Var}X(1)U\bigl(\min(t,s)
\bigr). \label{CovComp}
\end{align}
We will use the above formulas further in the paper.
\end{remark}

\begin{remark}\label{remark4}
Let $\widetilde{E}^{a}(t)=\widetilde{E}_{N}^{a}(t)$ be the
compound
Poisson-exponential process with Laplace exponent $\tilde
{f}_{a}(u)=\frac{u}{%
1+au}$, which we discussed in Remark \ref{remark2} above, and let $N(t)=N_{\lambda}(t)$
be the Poisson process with parameter $\lambda$, independent of $%
\widetilde{E}^{a}(t)$. Denote
$\widetilde{N}^{a}(t)=N(\widetilde{E}^{a}(t))$. In view of Remark
\ref{remark2}, for the processes $\widetilde{N}^{a}(t)$ we have the following
property concerning double and iterated compositions:
\[
\widetilde{N}^{a_{1}}\bigl(\widetilde{E}^{a_{2}}(t)\bigr)={N}
\bigl(\widetilde {E}^{a_{1}}\bigl(\widetilde{E}^{a_{2}}(t)\bigr)
\bigr)\stackrel{d} {=}\widetilde{N}%
^{a_{1}+a_{2}}(t)
\]
and
\[
\widetilde{N}^{a_{1}}\bigl(\widetilde{E}_{N}^{a_{2}}
\bigl(\ldots\widetilde{E}%
_{N}^{a_{n}}(t)\ldots\bigr)\bigr)
\stackrel{d} {=}\widetilde{N}^{\sum_{i=1}^{n}a_{i}}(t).
\]
This property makes processes $\widetilde{N}^{a}(t)$ similar to the
space-fractional Poisson processes, which are obtained as
\[
{N}^{\alpha}(t)=N\bigl({S}^{\alpha}(t)\bigr)
\]
where ${S}^{\alpha}(t)$ is a stable subordinator with $\mathsf
{E}e^{-\theta
{S}^{\alpha}(t)}=e^{-t\theta^{\alpha}}$. In the papers \cite
{OP,GOS} it was shown that
\[
{N}^{\alpha}\bigl({S}^{\gamma}(t)\bigr)\stackrel{d}
{=}N^{\alpha\gamma}(t)
\]
and
\begin{equation}
{N}^{\alpha}\bigl(S{^{\gamma_{1}}}\bigl(\ldots S{^{\gamma_{n}}}(t)\ldots\bigr)
\bigr)\stackrel {d} {=} {N%
}^{\alpha\prod_{i=1}^{n}\gamma_{i}}(t), \label{conserv}
\end{equation}
relation (\ref{conserv}) is referred to in \cite{GOS} as
auto-conservative property. This property deserves the further
investigation.
\end{remark}

\begin{remark}\label{remark5}
Note that the marginal laws of the time-changed processes obtained in
Theorems \ref{theorem1}, \ref{theorem2} (and in the next theorems in what follows) can be considered
as new classes of distributions, in particular, (\ref{pkNEN}) and (\ref
{pkNY}%
) below represent three-parameter distributions involving the generalized
Mittag-Leffler functions.
\end{remark}

We now consider Skellam processes $S(t)$ with time change, where the
role of
time is played by compound Poisson-Gamma subordinators $G_{N}(t)$ with
Laplace exponent $f(u)=\lambda\beta^{\alpha} ( \beta^{-\alpha
}-(\beta+u)^{-\alpha}{} ) $, $\lambda>0$, $\alpha>0$, $\beta>0$.

Let the process $S(t)$ have parameters $\lambda_{1}$ and $\lambda
_{2}$ and let us
consider first the time-changed Skellam process of type I, that is the
process
\[
S_{_{I}}(t)=S\bigl(G_{N}(t)\bigr)=N_{1}
\bigl(G_{N}(t)\bigr)-N_{2}\bigl(G_{N}(t)\bigr),
\]
where $N_{1}(t)$, $N_{2}(t)$ and $G_{N}(t)$ are mutually
independent.

\begin{theorem}\label{theorem3}
Let $S_{_{I}}(t)=S(G_{N}(t))$, then probabilities
$r_{k}(t)=P(S_{_{I}}(t)=k)$ are given by
\begin{equation}
r_{k}(t)=e^{-\lambda t} \biggl( \frac{\lambda_{1}}{\lambda_{2}} \biggr)
^{%
\frac{k}{2}}\int_{0}^{\infty}e^{-u ( \lambda_{1}+\lambda
_{2}+\beta
 ) }I_{|k|}
( 2u\sqrt{\lambda_{1}\lambda_{2}} ) \frac
{1}{u}%
\varPhi \bigl( \alpha,0,\mu t ( \beta u ) ^{\alpha} \bigr) du. \label{SIGN}
\end{equation}
The moment generating function of $S_{_{I}}(t)$ has the following
form:
\[
\mathsf{E}e^{\theta S_{I}(t)}=e^{-\lambda t ( 1-\beta^{\alpha
} (
\beta+\lambda_{1}+\lambda_{2}-\lambda_{1}e^{\theta}-\lambda
_{2}e^{-\theta} ) ^{-\alpha} ) }
\]
for $\theta$ such that $\beta+\lambda_{1}+\lambda_{2}-\lambda
_{1}e^{\theta}-\lambda_{2}e^{-\theta}\neq0$.
\end{theorem}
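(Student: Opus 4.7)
The plan is to mirror the conditioning strategy that proved Theorem \ref{theorem1}, now with the Skellam process playing the role of $N_1$. Both assertions follow from the tower property together with two facts already at our disposal: the Skellam pmf (\ref{skellamdistr}) and mgf from Section \ref{sec:prel}, and the explicit law (\ref{probGN}) of $G_N(t)$, which splits into an atom at $0$ and a Wright-function density.

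For the pmf I would write
$$
r_k(t)=\int_0^\infty P\bigl(S(u)=k\bigr)\,P\bigl(G_N(t)\in du\bigr),
$$
substitute (\ref{skellamdistr}) and (\ref{probGN}), and note that the atom $e^{-\lambda t}\delta_{\{0\}}$ at $u=0$ contributes $e^{-\lambda t}\delta_{k,0}$, since $P(S(0)=k)=\delta_{k,0}$. Pulling the prefactor $(\lambda_1/\lambda_2)^{k/2}e^{-\lambda t}$ outside, combining the Skellam exponential with the density's $e^{-\beta u}$ into $e^{-u(\lambda_1+\lambda_2+\beta)}$, and leaving the Bessel factor together with $u^{-1}\varPhi(\alpha,0,\lambda t(\beta u)^\alpha)$ inside the integral yields exactly (\ref{SIGN}).

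For the mgf, iterated expectation and independence give
$$
\mathsf{E}e^{\theta S_I(t)}=\mathsf{E}\bigl[e^{-G_N(t)\psi(\theta)}\bigr],\qquad \psi(\theta):=\lambda_1+\lambda_2-\lambda_1 e^\theta-\lambda_2 e^{-\theta},
$$
and since $G_N(t)$ is a L\'evy process with Laplace exponent $f(u)=\lambda(1-\beta^\alpha(\beta+u)^{-\alpha})$, substituting $\eta=\psi(\theta)$ into $\mathsf{E}[e^{-\eta G_N(t)}]=e^{-tf(\eta)}$ produces the stated closed form. The most delicate point, rather than a real obstacle, is that $\psi(\theta)$ can be negative, so the Laplace-transform identity must be read by analytic continuation in $\eta$; this is exactly why the theorem imposes $\beta+\psi(\theta)\neq 0$, which keeps $(\beta+\psi(\theta))^{-\alpha}$ well defined. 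A secondary bookkeeping step is a Fubini interchange when inserting the Wright-function series into (\ref{SIGN}), but the exponential decay in $u$ makes every term absolutely integrable, so the exchange is routine.
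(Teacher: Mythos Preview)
Your approach is correct and mirrors the paper's proof almost exactly: both condition on $G_N(t)$, substitute (\ref{skellamdistr}) and (\ref{probGN}) into $r_k(t)=\int_0^\infty s_k(u)\,P(G_N(t)\in du)$ for the pmf, and evaluate $\mathsf{E}e^{\theta S_I(t)}=\mathsf{E}e^{-f_S(-\theta)G_N(t)}$ via the Laplace exponent of $G_N$ for the mgf. You are in fact slightly more careful than the paper about the atom at $u=0$ and the analytic-continuation caveat for negative $\psi(\theta)$, both of which the paper passes over silently.
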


\begin{remark}\label{remark6}
For the case $\alpha=1$, that is, $S_{_{I}}(t)=S(E_{N}(t))$, we
obtain
\begin{align*}
r_{k}(t)&=e^{-\lambda t} \biggl( \frac{\lambda_{1}}{\lambda_{2}} \biggr)
^{%
\frac{k}{2}}\int_{0}^{\infty}e^{-u ( \lambda_{1}+\lambda
_{2}+\beta
 ) }I_{|k|}
( 2u\sqrt{\lambda_{1}\lambda_{2}} ) \sqrt {
\frac{%
\lambda\beta t}{u}}I_{1} ( 2\sqrt{\lambda t\beta u} ) du,\\
\mathsf{E}e^{\theta S_{I}(t)}&=e^{-\lambda t ( \lambda_{1}+\lambda
_{2}-\lambda_{1}e^{\theta}-\lambda_{2}e^{-\theta} )  (
\beta
+\lambda_{1}+\lambda_{2}-\lambda_{1}e^{\theta}-\lambda
_{2}e^{-\theta
} ) ^{-1}}.
\end{align*}
\end{remark}

\begin{proof}[Proof of Theorem \ref{theorem3}] Using conditioning arguments, we can
write:
\[
r_{k}(t)=\int_{0}^{\infty}s_{k}(u)P
\bigl(G_{N}(t)\in du\bigr),
\]
where $s_{k}(u)=P(S(t)=k)$. Inserting the expressions for $s_{k}(u)$
and $%
P(G_{N}(t)\in du)$, which are given by formulas (\ref{skellamdistr})
and (%
\ref{probGN}) correspondingly, we come to (\ref{SIGN}). The moment
generating function can be obtained as follows:
\begin{align*}
\mathsf{E}e^{\theta S_{I}(t)} &=\int_{0}^{\infty}
\mathsf{E}e^{\theta
S(t)}P\bigl(G_{N}(t)\in du\bigr)=
\mathsf{E}e^{-f_{s}(-\theta)G_{N}(t)}
\\
&=\mathsf{E}e^{- ( \lambda_{1}+\lambda_{2}-\lambda_{1}e^{\theta
}-\lambda_{2}e^{-\theta} ) G_{N}(t)}
\\
&=e^{-t\lambda ( 1-\beta^{\alpha} ( \beta+\lambda
_{1}+\lambda
_{2}-\lambda_{1}e^{\theta}-\lambda_{2}e^{-\theta} ) ^{-\alpha
} ) }
\end{align*}
for $\theta$ such that $\beta+\lambda_{1}+\lambda_{2}-\lambda
_{1}e^{\theta}-\lambda_{2}e^{-\theta}\neq0$.
\end{proof}

\begin{remark}\label{remark7}
The mean, variance and covariance function for Skellam process
$S_I(t)=S(G_N(t))$ can be calculated
with the use of the general result for time-changed L\'{e}vy
processes stated in \cite{LMSS}, Theorem 2.1 (see our Remark \ref{remark3},
formulas (\ref{expComp})--(\ref{CovComp})) and expressions for the
mean, variance and covariance of Skellam process, which are
\begin{align*}
\mathsf{E}S(t) &= (\lambda_1-\lambda_2 ) t;\\
\mathsf{Var}S(t) &= (\lambda_1+\lambda_2 )t;\\
\mathsf{Cov}\bigl(S(t),S(s)\bigr) &= (\lambda_1+
\lambda_2 )\min(t,s).
\end{align*}
We obtain:
\begin{align*}
\mathsf{E}S\bigl(G_N(t)\bigr) &= (\lambda_1-
\lambda_2 ) \alpha\beta ^{-1} \lambda t;\\
\mathsf{Var}S\bigl(G_N(t)\bigr)& =\alpha\beta^{-2}
\lambda \bigl[ \bigl(\lambda _1-\lambda_2(\alpha+1)
\bigr)^2+ (\lambda_1+\lambda_2 )\beta
\bigr]t;\\
\mathsf{Cov}\bigl(S\bigl(G_N(t)\bigr),S\bigl(G_N(s)
\bigr)\bigr) &= \alpha\beta^{-2} \lambda \bigl[ \bigl(
\lambda_1-\lambda_2(\alpha+1) \bigr)^2+ (
\lambda_1+\lambda _2 )\beta \bigr]\min(t,s).
\end{align*}
\end{remark}

Consider now the time-changed Skellam process of type II, where the
role of
time is played by the subordinator $X(t)=E_{N}(t)$ with Laplace
exponent $%
f(u)=\lambda\frac{u}{\beta+u}$, that is, the process
\begin{equation}
S_{\mathit{II}}(t)=N_{1}\bigl(X_{1}(t)
\bigr)-N_{2}\bigl(X_{2}(t)\bigr), \label{s2exp}
\end{equation}
where $X_{1}(t)$ and $X_{2}(t)$ are independent copies $X(t)$ and
independent of $N_{1}(t)$, $N_{2}(t)$.

\begin{theorem}\label{theorem4}
Let $S_{\mathit{II}}(t)$ be the time-changed Skellam process of type II
given by (\ref{s2exp}). Its probability mass function is given by\vadjust{\goodbreak}
\begin{align}
P\bigl(S_{\mathit{II}}(t)=k\bigr) &=e^{-2\lambda t}\frac{ ( \lambda t\beta )
^{2}\lambda_{1}^{k}}{(\lambda_{1}+\beta)^{k+1}(\lambda_{2}+\beta)}
\sum_{n=0}^{\infty}
\frac{ ( \lambda_{1}\lambda_{2} ) ^{n}}{
(\lambda_{1}+\beta)^{n}(\lambda_{2}+\beta)^{n}}
\nonumber
\\
&\quad\times\,\mathcal{E}_{1,2}^{n+k+1} \biggl( \frac{\lambda\beta
t}{\lambda
_{1}+\beta}
\biggr) \mathcal{E}_{1,2}^{n+1} \biggl( \frac{\lambda\beta
t}{%
\lambda_{2}+\beta}
\biggr) \label{ps2_1}
\end{align}
for $k\in Z$, $k\geq0$, and when $k<0$
\begin{align}
P\bigl(S_{\mathit{II}}(t)=k\bigr) &=e^{-2\lambda t}\frac{ ( \lambda t\beta )
^{2}\lambda_{2}^{ \llvert  k \rrvert  }}{(\lambda_{1}+\beta)(\lambda
_{2}+\beta)^{ \llvert  k \rrvert  +1}}\sum
_{n=0}^{\infty}\frac{ (
\lambda
_{1}\lambda_{2} ) ^{n}}{(\lambda_{1}+\beta)^{n}(\lambda
_{2}+\beta
)^{n}}
\nonumber
\\
&\quad\times\,\mathcal{E}_{1,2}^{n+1} \biggl( \frac{\lambda\beta t}{\lambda
_{1}+\beta}
\biggr) \mathcal{E}_{1,2}^{n+ \llvert  k \rrvert  +1} \biggl( \frac{%
\lambda\beta t}{\lambda_{2}+\beta}
\biggr) . \label{ps2_2}
\end{align}
The moment generating function is
\[
\mathsf{E}e^{\theta S_{\mathit{II}}(t)}=e^{-t\lambda\lambda_{1}(1-e^{\theta
})\,/\,(\beta+\lambda_{1}(1-e^{\theta}))}e^{-t\lambda\lambda
_{2}(1-e^{-\theta})\,/\,(\beta+\lambda_{2}(1-e^{-\theta}))}
\]
for $\theta$ such that $\beta+\lambda_{1}(1-e^{\theta})\neq
0$.
\end{theorem}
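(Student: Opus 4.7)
For the moment generating function I would use the independence of $N_1(X_1(t))$ and $N_2(X_2(t))$ to factor
\[
\mathsf{E}e^{\theta S_{II}(t)} = \mathsf{E}e^{\theta N_1(X_1(t))}\cdot \mathsf{E}e^{-\theta N_2(X_2(t))}.
\]
Each factor is of the form (\ref{mgNGN}) in the special case $\alpha=1$ (so that $G_N$ reduces to $E_N$), applied once with intensity $\lambda_1$ and once with intensity $\lambda_2$. After using the algebraic identity $1-\beta/(\beta+x) = x/(\beta+x)$ with $x=\lambda_i(1-e^{\pm\theta})$, the stated product of two exponentials follows immediately, on the domain where the denominators are nonzero.

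For the probability mass function the natural starting point is the convolution identity
\[
P\bigl(S_{II}(t)=k\bigr) = \sum_{j} P\bigl(N_1(X_1(t))=k+j\bigr)\,P\bigl(N_2(X_2(t))=j\bigr),
\]
where $j$ ranges over $j\geq 0$ when $k\geq 0$ and over $j\geq |k|$ when $k<0$; in the latter case the substitution $m=j-|k|$ interchanges the roles of $\lambda_1$ and $\lambda_2$, which explains the symmetric swap between (\ref{ps2_1}) and (\ref{ps2_2}). I would substitute the explicit PMF of $N_i(E_N(t))$ from Theorem \ref{theorem2}, treating separately the boundary terms where either argument is zero, since $p_0^E$ has the exponential form (\ref{p0NEN}) rather than the generic Mittag-Leffler form (\ref{pkNEN}).

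The central algebraic step is to factor out the common prefactor $e^{-2\lambda t}(\lambda\beta t)^2\lambda_1^k/[(\lambda_1+\beta)^{k+1}(\lambda_2+\beta)]$ (for $k\geq 0$) and recognize that what remains is a single series over $n$ whose general term is the product of two three-parameter Mittag-Leffler functions $\mathcal{E}_{1,2}^{n+k+1}(\lambda\beta t/(\lambda_1+\beta))$ and $\mathcal{E}_{1,2}^{n+1}(\lambda\beta t/(\lambda_2+\beta))$. The key tool here is the power-series definition (\ref{ML3}): each factor $p_{k+j}^E$ and $p_j^E$ is itself a power series in $t$, and matching like powers produces the coefficients $\Gamma(\gamma+k)/\Gamma(\gamma)$ that make up the three-parameter Mittag-Leffler coefficients.

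The main obstacle I anticipate is the bookkeeping for the boundary contributions. The exponential expression (\ref{p0NEN}) for $p_0^E$ must combine consistently with the generic Mittag-Leffler terms so that the entire expression collapses to the single-indexed sum claimed. The cleanest route I foresee is to find a uniform expansion of $p_k^E(t)$ valid for all $k\geq 0$ (for instance by splitting $p_0^E$ into a constant piece $e^{-\lambda t}$ and a piece matching the generic formula at $k=0$), thereby reducing the verification of (\ref{ps2_1})--(\ref{ps2_2}) to a single re-indexing argument rather than a case-by-case computation.
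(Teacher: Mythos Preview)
Your approach matches the paper's: convolution via independence (the paper's formula (\ref{proofSII})), substitution of the Theorem~\ref{theorem2} expressions, and factorization of the resulting product; the moment generating function is handled identically by independence and (\ref{mgNGN}) with $\alpha=1$.

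The paper's execution is more direct than you anticipate, however. Since (\ref{pkNEN}) already expresses $p_k^E$ as a constant times a single Mittag-Leffler function $\mathcal{E}_{1,2}^{k+1}$, the paper simply plugs this into the convolution and immediately reads off the product form (\ref{ps2_1}); no power-series expansion or coefficient-matching is carried out. The paper also does not address the boundary issue you raise --- it applies (\ref{pkNEN}) uniformly at the index $n=0$ as well, without separating out the exponential form (\ref{p0NEN}) --- so your caution on that point goes beyond what the paper's own proof exercises.
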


\begin{proof}[Proof of Theorem \ref{theorem4}] Using the independence of $%
N_{1}(X_{1}(t))$ and $N_{2}(X_{2}(t))$, we can write:
\begin{align}
P \bigl( S_{\mathit{II}}(t)=k \bigr) &=\sum_{n=0}^{\infty}P
\bigl\{ N_{1}\bigl(X_{1}(t)\bigr)=n+k \bigr\} P \bigl\{
N_{2}\bigl(X_{2}(t)\bigr)=n \bigr\} I_{
\{
k\geq0 \} }
\nonumber\\
&\quad+\sum_{n=0}^{\infty}P \bigl\{
N_{1}\bigl(X_{1}(t)\bigr)=n \bigr\} P \bigl\{
N_{2}\bigl(X_{2}(t)\bigr)=n+|k| \bigr\} I_{ \{ k<0 \} }
\label{proofSII}
\end{align}
and then we use the expressions for probabilities of $N_{i}(E_{N}(t))$ given
in Theorem \ref{theorem2}. For $k>0$ we have the expression
\begin{align*}
P \bigl( S_{\mathit{II}}(t)=k \bigr) &=\sum_{n=0}^{\infty}e^{-2\lambda t}
\frac{
\lambda_{1}^{n+k}\lambda t\beta}{(\lambda_{1}+\beta
)^{n+k+1}}\mathcal{E}%
_{1,2}^{n+k+1} \biggl(
\frac{\lambda\beta t}{\lambda_{1}+\beta} \biggr)
\\
&\quad\times\frac{\lambda_{2}^{n}\lambda t\beta}{(\lambda_{2}+\beta
)^{n+1}}%
\mathcal{E}_{1,2}^{n+1}
\biggl( \frac{\lambda\beta t}{\lambda2+\beta}%
 \biggr) ,
\end{align*}
from which we obtain (\ref{ps2_1}); and in the analogous way we come to
(\ref
{ps2_2}).  In view of independence of $N_{1}(X_{1}(t))$ and $%
N_{2}(X_{2}(t))$, the moment generating function is obtained as the product:
\[
\mathsf{E}e^{\theta S_{\mathit{II}}(t)}=\mathsf{E}e^{\theta
N_{1}(X_{1}(t))}\mathsf{E}%
e^{-\theta N_{2}(X_{2}(t))},
\]
and then we use expression (\ref{mgNGN}) for $\alpha=1$.
\end{proof}

\begin{remark}\label{remark8}
The moments of $S_{\mathit{II}}(t)$ can be calculated using the moment
generating function given in Theorem \ref{theorem4}, or using the independence of
processes $N_i(X_i(t)), i=1,2$,
and corresponding expressions for the moments of
$N_i(X_i(t))$, $i=1,2$.
Since $N_1(X_1(t))$ and $N_2(X_2(t))$ are independent we can also
easily obtain the covariance function as follows:
\begin{align*}
&\mathsf{Cov} \bigl(S_{\mathit{II}}(t), S_{\mathit{II}}(s) \bigr)\\
&\quad= \mathsf{Cov}
\bigl(N_1\bigl(X_1(t)\bigr),N_1
\bigl(X_1(s)\bigr) \bigr)
+\mathsf{Cov} \bigl(N_2\bigl(X_2(t)\bigr),N_2
\bigl(X_2(s)\bigr) \bigr)\\
&\quad=\lambda\beta^{-2} \bigl[\beta(\lambda_1+
\lambda_2)+2\bigl(\lambda _1^2+
\lambda_2^2\bigr) \bigr]\min(t,s),
\end{align*}
where the expressions for covariance function of the
process $N_1(E_N(t))$ are used (see formula
\eqref{covGN} with $\alpha=1$).
\end{remark}

\section{Inverse compound Poisson-Gamma process as time change}\label{section5}

\subsection{{Inverse compound Poisson-exponential process}}\label{section5.1}

We first consider the process $E_{N}(t)$, the compound Poisson process with
exponentially distributed jumps with Laplace exponent $f(u)=\lambda
\frac{u}{%
\beta+u}$.

Define the inverse process (or first passage time):
\begin{equation}
Y(t)=\inf\bigl\{u\geq0;\text{ }E_{N}(u)>t\bigr\},\quad t\geq0.
\label{Y1}
\end{equation}
It is known (see, e.g., \cite{Cox}) that the process $Y(t)$ has
density
\begin{align}
h(s,t) &=\lambda e^{-\lambda s-\beta t}I_{0} ( 2\sqrt{\lambda \beta
st}%
 )
\nonumber
\\
&=\lambda e^{-\lambda s-\beta t}\varPhi ( 1,1,\lambda\beta st ) , \label{Ydensity}
\end{align}
and its Laplace transform is
\begin{equation}
\mathsf{E}e^{-\theta Y(t)}=\frac{\lambda}{\theta+\lambda}e^{-\beta
\frac{%
\theta}{\theta+\lambda}t}, \label{LaplaceForInverse}
\end{equation}
which can be also 
verified as follows:
\begin{align*}
\mathsf{E}e^{-\theta Y(t)} &=\int_{0}^{\infty}e^{-\theta u}
\lambda e^{-\lambda u-\beta t}I_{0} ( 2\sqrt{\lambda\beta ut} ) du
\\
&=\int_{0}^{\infty}e^{-\theta u}\lambda
e^{-\lambda u-\beta
t}\sum_{n=0}^{\infty}
\frac{ ( \lambda\beta ut ) ^{n}}{ (
n! ) ^{2}}du
\\
&=\lambda e^{-\beta t}\sum_{n=0}^{\infty}
\frac{ ( \lambda\beta
t ) ^{n}}{ ( n! ) ^{2}}\int_{0}^{\infty}e^{-(\theta
+\lambda
)u}u^{n}du
\\
&=\lambda e^{-\beta t}\sum_{n=0}^{\infty}
\frac{ ( \lambda\beta
t ) ^{n}}{n! ( \theta+\lambda ) ^{n+1}}=\frac{\lambda
}{%
\theta+\lambda}e^{-\beta\frac{\theta}{\theta+\lambda}t}.
\end{align*}
Moments of $Y(t)$ can be easily found by the direct calculations. We have
\begin{equation}
\mathsf{E}Y(t)\,{=}\,\frac{1}{\lambda} ( \beta t+1 ) ,\qquad\mathsf{E}%
Y(t)^{2}\,{=}\,
\frac{1}{\lambda^{2}} \bigl( \beta^{2}t^{2}+4\beta t+2 \bigr) ,
\qquad%
\mathsf{Var}Y(t)\,{=}\,\frac{1}{\lambda^{2}} ( 2\beta t+1 ) . \label{momentsY}
\end{equation}
For example, the first moment can be obtained as follows:
\begin{align*}
\mathsf{E}Y(t&)=\int_{0}^{\infty}uh(u,t)du=\int
_{0}^{\infty}u\lambda e^{-\lambda u-\beta t}I_{0}
( 2\sqrt{\lambda\beta ut} ) du
\\
&=\int_{0}^{\infty}u\lambda e^{-\lambda u-\beta t}\sum
_{k=0}^{\infty
}%
\frac{ ( \lambda\beta ut ) ^{k}}{ ( k! )
^{2}}du=
\lambda e^{-\beta t}\sum_{k=0}^{\infty}
\frac{ ( \lambda\beta t )
^{k}}{%
 ( k! ) ^{2}}\int_{0}^{\infty}u^{k+1}e^{-\lambda u}du
\\
&=\lambda e^{-\beta t}\sum_{k=0}^{\infty}
\frac{ ( \lambda\beta
t ) ^{k}(k+1)!}{ ( k! ) ^{2}\lambda^{k+2}}=\frac
{1}{\lambda}%
e^{-\beta t}\sum
_{k=0}^{\infty}\frac{ ( \beta t )
^{k}(k+1)}{k!}=%
\frac{1}{\lambda} ( \beta t+1 ) .
\end{align*}

The covariance function of the process $Y(t)$\ can be calculated
using the results on moments of the inverse subordinators stated
in \cite{VT}.

\begin{lemma}\label{lemma1} Let $Y(t)$ be the inverse process given by
(\ref
{Y1}). Then
\begin{equation}
\mathsf{Cov}\bigl(Y(t),Y(s)\bigr)=\frac{1}{\lambda^{2}} \bigl( 2\beta\min (t,s)+1
\bigr) . \label{covY}
\end{equation}
\end{lemma}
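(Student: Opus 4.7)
The plan is to invoke the formula of Veillette and Taqqu \cite{VT} expressing the second mixed moment of an inverse subordinator in terms of its renewal function $U(t)=\mathsf{E}Y(t)$: for $0\le s\le t$,
\begin{equation*}
\mathsf{E}\bigl[Y(s)Y(t)\bigr]=\int_{0}^{s}\bigl[U(t-u)+U(s-u)\bigr]\,dU(u).
\end{equation*}
From \eqref{momentsY} we already know that $U(t)=\lambda^{-1}(\beta t+1)$, so the remaining work is the integration against $dU$.

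A crucial observation is that $U$ has a jump at the origin. Since $E_{N}$ is a compound Poisson process starting from $0$, the first passage time $Y(0)=\inf\{u\ge 0 : E_{N}(u)>0\}$ coincides with the first jump time of the underlying rate-$\lambda$ Poisson process, hence is exponentially distributed with parameter $\lambda$; in particular $U(0)=1/\lambda$. Consequently the Stieltjes measure $dU$ consists of an atom of mass $1/\lambda$ at $u=0$ together with Lebesgue density $\beta/\lambda$ on $(0,\infty)$.

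Given this decomposition, I would split the integral as
\begin{equation*}
\mathsf{E}\bigl[Y(s)Y(t)\bigr]=\frac{1}{\lambda}\bigl[U(t)+U(s)\bigr]+\frac{\beta}{\lambda}\int_{0}^{s}\bigl[U(t-u)+U(s-u)\bigr]\,du,
\end{equation*}
substitute the explicit linear form of $U$, and compute the two elementary integrals. Subtracting $\mathsf{E}Y(t)\cdot\mathsf{E}Y(s)=\lambda^{-2}(\beta t+1)(\beta s+1)$ then yields, after the polynomial cancellations, the claimed value $\lambda^{-2}(2\beta s+1)$; by symmetry this extends to arbitrary $s,t\ge 0$ with $\min(t,s)$ in place of $s$.

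The computation is entirely mechanical once the ingredients are in place. The only conceptual step requiring care is recognising and correctly accounting for the atom of $dU$ at the origin; omitting it would lose precisely the constant term $\lambda^{-2}$ appearing in \eqref{covY} and lead to an incorrect answer.
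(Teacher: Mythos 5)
Your proposal is correct and follows essentially the same route as the paper: both rest on the Veillette--Taqqu moment formula for inverse subordinators, the paper applying it in the Laplace domain and inverting the double transform, you applying its time-domain Stieltjes form directly. Your atom of $dU$ of mass $1/\lambda$ at the origin is exactly the $\frac{1}{\lambda}\delta(\tau_{1})\delta(\tau_{2})$ term in the paper's inversion of $1/f(u_{1}+u_{2})$, so the two computations coincide from that point on and yield \eqref{covY}.
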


The proof of Lemma \ref{lemma1} is given in Appendix \ref{appendixa.1}.

\begin{remark}\label{remark9}
It is known that, generally, the inverse subordinator is a process
with non-stationary, non-independent increments (see, e.g.,
\cite{VT}). We have not investigated here this question for the
process $Y(t)$, however we can observe the same similarity between
the expressions for variance and covariance of $Y(t)$ as that
which holds for the processes with stationary independent
increments.
\end{remark}

\begin{remark}\label{remark10}
Note that, for inverse processes, the important role is played by
the function $U(t)=\mathsf{E}Y(t)$, which is called the renewal
function. This function can be calculated using the following
formula for its Laplace transform:
\[
\int_{0}^{\infty}U(t)e^{-st}dt=
\frac{1}{sf(s)},
\]
where $f(s)$ is the Laplace exponent of the subordinator, for
which $Y(t)$ is the inverse (see, \cite{VT}, formula (13)).

In our case we obtain
\[
\int_{0}^{\infty}U(t)e^{-st}dt=
\frac{\beta+u}{\lambda u^{2}}=\frac{1}{
\lambda} \biggl( \frac{\beta}{u^{2}}+
\frac{1}{u^{{}}} \biggr) ,
\]
and by inverting this Laplace transform we come again to the
expression for the first moment given in (\ref{momentsY}). The
function $U(t)$ characterizes
the distribution of the inverse process $Y(t)$, since all the moments
of $%
Y(t)$ (and the mixed moments as well) can be expressed (by
recursion) in terms of $U(t)$ (see, \cite{VT}).
\end{remark}

Let $N_{1}(t)$ be the Poisson process with intensity $\lambda_{1}$.
Consider the time-changed process $Z(t)=N_{1}(Y(t))$, where $Y(t)$ is the
inverse process given by (\ref{Y1}), independent of $N_{1}(t)$.

\begin{theorem}\label{theorem5}
The probability mass function of the process $Z(t)=N_{1}(Y(t))$ is
given by
\begin{equation}
p_{k}^{I}(t)=P\bigl(Z(t)=k\bigr)=e^{-\beta t}
\frac{\lambda\lambda
_{1}^{k}(k+1)}{%
(\lambda_{1}+\lambda)^{k+1}}\mathcal{E}_{1,1}^{k+1} \biggl(
\frac
{\lambda
t\beta}{\lambda_{1}+\lambda} \biggr) , \label{pkNY}
\end{equation}
the mean and variance are:
\begin{equation}
\mathsf{E}Z(t)=\frac{\lambda_{1}}{\lambda} ( \beta t+1 ) ,\qquad \mathsf{Var}Z(t)=\frac{\lambda_{1}^{2}}{\lambda^{2}}
( 2\beta t+1 ) +%
\frac{\lambda_{1}}{\lambda} ( \beta t+1 ) ,
\end{equation}
and the covariance function has the following form:
\begin{equation}
\mathsf{Cov}\bigl(Z(t),Z(s)\bigr)=\frac{\lambda_{1}^{2}}{\lambda^{2}} \bigl( 2\beta \min(t,s)+1
\bigr) +\frac{\lambda_{1}}{\lambda} \bigl( \beta\min (t,s)+1 \bigr) .
\end{equation}

The Laplace transform is given by
\begin{equation}
\mathsf{E}e^{-\theta Z(t)}=\frac{\lambda}{\lambda+\lambda
_{1}(1-e^{-\theta})\,}e^{-\beta t\lambda_{1}(1-e^{-\theta}) (
\lambda
+\lambda_{1}(1-e^{-\theta}) ) ^{-1}}. \label{26}
\end{equation}
\end{theorem}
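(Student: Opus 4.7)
The plan is to handle the four claims separately, with the probability mass function being the substantive calculation and the moments and Laplace transform following quickly from previously established general formulas.

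For the pmf, I would condition on $Y(t)$ and write
\[
p_k^{I}(t)=\int_{0}^{\infty} P\bigl(N_1(u)=k\bigr)\, h(u,t)\,du
=\frac{\lambda\lambda_1^k}{k!}e^{-\beta t}\int_{0}^{\infty} u^{k} e^{-(\lambda_1+\lambda)u} I_{0}\bigl(2\sqrt{\lambda\beta u t}\bigr)\,du,
\]
using the density \eqref{Ydensity}. Then I would expand $I_0$ via the series $\Phi(1,1,\lambda\beta ut)=\sum_{n\ge0}(\lambda\beta u t)^n/(n!)^2$, interchange sum and integral, and evaluate each gamma integral as $\Gamma(k+n+1)/(\lambda_1+\lambda)^{k+n+1}$. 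After pulling out the factor $\lambda\lambda_1^k/(\lambda_1+\lambda)^{k+1}$ and the exponential $e^{-\beta t}$, the remaining series has the form $\sum_{n\ge0}\frac{(k+n)!}{k!(n!)^2}\bigl(\frac{\lambda\beta t}{\lambda_1+\lambda}\bigr)^n$, which is precisely $\mathcal{E}_{1,1}^{k+1}\bigl(\frac{\lambda\beta t}{\lambda_1+\lambda}\bigr)$ by the definition \eqref{ML3} with $\rho=\delta=1$, $\gamma=k+1$. This recognition step is the one place where care is needed, but it parallels exactly the reduction already carried out in the proof of Theorem~\ref{theorem2}.

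For the mean, variance, and covariance, I would directly apply the general formulas \eqref{expComp}, \eqref{VarComp}, \eqref{CovComp} from Remark~\ref{remark3} with $X(t)=N_1(t)$ (so that $\mathsf{E}X(1)=\mathsf{Var}X(1)=\lambda_1$) and with $Y(t)$ the inverse process defined in \eqref{Y1}. Substituting the explicit expressions for $\mathsf{E}Y(t)$ and $\mathsf{Var}Y(t)$ from \eqref{momentsY} and for $\mathsf{Cov}(Y(t),Y(s))$ from Lemma~\ref{lemma1} yields the three formulas immediately; this is a routine arithmetic step and not a real obstacle.

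Finally, for the Laplace transform I would condition on $Y(t)$ and use the generating function of the Poisson process:
\[
\mathsf{E}e^{-\theta Z(t)}=\mathsf{E}\bigl[\mathsf{E}\bigl(e^{-\theta N_1(Y(t))}\mid Y(t)\bigr)\bigr]=\mathsf{E}\exp\bigl\{-\lambda_1(1-e^{-\theta})Y(t)\bigr\}.
\]
Then I would apply the known Laplace transform \eqref{LaplaceForInverse} of $Y(t)$ at the argument $\lambda_1(1-e^{-\theta})$, which produces \eqref{26} verbatim. The only delicate point is ensuring $\lambda+\lambda_1(1-e^{-\theta})\neq 0$, which is the stated hypothesis. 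Overall, the genuine work is confined to recognizing the three-parameter Mittag-Leffler series in step one; everything else is a direct appeal to results already in hand.
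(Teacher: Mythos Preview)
Your approach is essentially identical to the paper's: condition on $Y(t)$, expand $I_0$ as the series $\sum_{n\ge0}(\lambda\beta ut)^n/(n!)^2$, integrate term by term to obtain $\Gamma(n+k+1)/(\lambda_1+\lambda)^{n+k+1}$, recognize the resulting series as $\mathcal{E}_{1,1}^{k+1}$, and then invoke \eqref{expComp}--\eqref{CovComp} together with \eqref{momentsY}, Lemma~\ref{lemma1}, and \eqref{LaplaceForInverse} for the moments and Laplace transform. One remark: your (correct) identification gives the prefactor $\lambda\lambda_1^k/(\lambda_1+\lambda)^{k+1}$ \emph{without} the extra $(k+1)$ appearing in \eqref{pkNY}; that factor is a misprint in the paper (with it the probabilities at $t=0$ sum to $(\lambda_1+\lambda)/\lambda\neq1$), and the paper's own chain of equalities in fact produces exactly the expression you obtain.
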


\begin{proof}[Proof of Theorem \ref{theorem5}] The probabilities
$p_{k}^{I}(t)=P(Z(t)=k)$ can be obtained by means of the following
calculations:
\begin{align*}
p_{k}^{I}(t) &=P\bigl(N_{1}\bigl(Y(t)\bigr)=k
\bigr)=\int_{0}^{\infty
}P\bigl(N_{1}(s)=k
\bigr)h(s,t)ds
\\
&=\int_{0}^{\infty}e^{-\lambda_{1}s}\frac{ ( \lambda_{1}s
) ^{k}%
}{k!}
\lambda e^{-\lambda s-\beta t}I_{0} ( 2\sqrt{\lambda\beta st} 
 )
ds
\\
&=\int_{0}^{\infty}e^{-\lambda_{1}s-\lambda s-\beta t}\lambda
\frac{
 ( \lambda_{1}s ) ^{k}}{k!}\sum_{n=0}^{\infty}
\frac{ (
\lambda\beta st ) ^{n}}{ ( n! ) ^{2}}ds
\\
&=\frac{e^{-\beta t}\lambda\lambda_{1}^{k}}{k!}\sum_{n=0}^{\infty
}
\frac{%
 ( \lambda\beta t ) ^{n}}{ ( n! ) ^{2}}\int_{0}^{\infty
}e^{-(\lambda_{1}+\lambda)s}s^{k+n}ds
\\
&=\frac{e^{-\beta t}\lambda\lambda_{1}^{k}}{k!}\sum_{n=0}^{\infty
}
\frac{%
 ( \lambda\beta t ) ^{n}\varGamma(n+k+1)}{ ( n! )
^{2}(\lambda_{1}+\lambda)^{n+k+1}}
\\
&=e^{-\beta t}\frac{\lambda\lambda_{1}^{k}(k+1)}{(\lambda
_{1}+\lambda
)^{k+1}}\mathcal{E}_{1,1}^{k+1}
\biggl( \frac{\lambda t\beta}{\lambda
_{1}+\lambda} \biggr) .
\end{align*}
The mean and variance can be calculated using formulas (\ref{expComp}),
(\ref
{VarComp}) and expressions (\ref{momentsY}).

The Laplace transform is obtained as follows:
\begin{align*}
\mathsf{E}e^{-\theta N_{1}(Y(t))} &=\int_{0}^{\infty}
\mathsf {E}e^{-\theta
N_{1}(t)}h(u,t)du=\mathsf{E}e^{-\lambda_{1} ( 1-e^{-\theta} )
Y(t)}
\\
&=\frac{\lambda}{\lambda+\lambda_{1}(1-e^{-\theta})\,}\exp \biggl( -t%
\frac{\beta\lambda_{1}(1-e^{-\theta})}{\lambda+\lambda
_{1}(1-e^{-\theta
})} \biggr) .\hspace*{40pt}\qedhere
\end{align*}
\end{proof}

We now state the relationship, in the form of a system of
differential equations, between the marginal distributions of the
processes $N_{1}(E_{N}(t))$ and $N_{1}(Y(t))$, with $Y(t)$ being
the inverse process for $E_{N}(t)$.\goodbreak

Introduce the following notations:
\begin{align}
p_{k}^{E}(t)&=p_{k}^{E}(t,\lambda,
\beta,\lambda _{1})=P\bigl(N_{1}\bigl(E_{N}(t)
\bigr)=k\bigr)=P\big(N_{1}^{\lambda_{1}}\xch{\bigl(E^{\beta}
\bigl(N^{\lambda}(t)\bigr)\big)}{\bigl(E^{\beta}
\bigl(N^{\lambda}(t)\bigr)}=k\bigr), \label{pke}\\
\hat{p}_{k}^{E}(t)&=p_{k}^{E}(t,
\beta,\lambda,\lambda_{1})=P\bigl(N_{1}\bigl(%
\widehat{E}_{N}(t)\bigr)=k\bigr)=P\xch{\big(N_{1}^{\lambda_{1}}
\bigl(E^{\lambda}\bigl(N^{\beta}(t)\bigr)\big)}{(N_{1}^{\lambda_{1}}
\bigl(E^{\lambda}\bigl(N^{\beta}(t)\bigr)}=k\bigr), \label{pkehat}
\end{align}
for $\hat{p}_{k}^{E}(t)$ we changed the order of parameters $\lambda$
and $%
\beta$, that is, parameters in $E$ and $N$ are interchanged, and
$\lambda$ and $%
\beta$ are now parameters of $E$ and $N$ correspondingly; and for the
inverse process we denote:
\begin{equation}
p_{k}^{I}(t)=p_{k}^{I}(t,\lambda,
\beta,\lambda_{1})=P\bigl(N_{1}\bigl(Y(t)\bigr)=k\bigr),
\label{pki}
\end{equation}
where $Y(t)$ is the inverse process for $E_{N}(t)=E^{\beta}\xch{(N^{\lambda}(t))}{(N^{\lambda}(t)}$,
\begin{equation}
\hat{p}_{k}^{I}(t)=p_{k}^{I}(t,
\beta,\lambda,\lambda_{1})=P\bigl(N_{1}\bigl(%
\widehat{Y}(t)\bigr)=k\bigr), \label{pkihat}
\end{equation}
where $\widehat{Y}(t)$ is the inverse process for $\widehat{E}%
_{N}(t)=E^{\lambda}\xch{(N^{\beta}(t))}{(N^{\beta}(t)}$.

\begin{theorem}\label{theorem6}
The probabilities $p_{k}^{E}(t)$, $\hat{p}_{k}^{E}(t)$ and
$p_{k}^{I}(t)$, $%
\hat{p}_{k}^{I}(t)$ satisfy the following differential equations:
\begin{equation}
\frac{d}{dt}\hat{p}_{k}^{E}(t)=-\beta
\hat{p}_{k}^{E}(t)+\frac{\beta
}{k+1}%
p_{k}^{I}(t)
\label{eq1}
\end{equation}
and
\begin{equation}
\frac{d}{dt}p_{k}^{E}(t)=-\lambda
p_{k}^{E}(t)+\frac{\lambda}{k+1}\hat {p}%
_{k}^{I}(t).
\label{eq2}
\end{equation}
If $\lambda=\beta$ then $p_{k}^{E}(t)$ and $p_{k}^{I}(t)$ satisfy the
following equation:
\begin{equation}
\frac{d}{dt}p_{k}^{E}(t)=-\lambda
p_{k}^{E}(t)+\frac{\lambda}{k+1}%
p_{k}^{I}(t).
\label{eq3}
\end{equation}
\end{theorem}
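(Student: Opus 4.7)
My plan is to verify the three differential equations directly from the closed-form expressions already obtained, namely
\[
p_k^E(t,\lambda,\beta,\lambda_1) = e^{-\lambda t}\frac{\lambda_1^k \lambda\beta t}{(\lambda_1+\beta)^{k+1}}\mathcal{E}_{1,2}^{k+1}\!\left(\frac{\lambda\beta t}{\lambda_1+\beta}\right)
\]
from Theorem~\ref{theorem2}, and
\[
p_k^I(t,\lambda,\beta,\lambda_1) = e^{-\beta t}\frac{\lambda\lambda_1^k (k+1)}{(\lambda_1+\lambda)^{k+1}}\mathcal{E}_{1,1}^{k+1}\!\left(\frac{\lambda\beta t}{\lambda_1+\lambda}\right)
\]
from Theorem~\ref{theorem5}, together with the definitions (\ref{pkehat}) and (\ref{pkihat}) that swap the roles of $\lambda$ and $\beta$. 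The whole proof will reduce, after the product rule is applied, to a single algebraic identity between three-parameter Mittag-Leffler functions with different second indices.

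The key identity is
\begin{equation}\label{eq:keyML}
\frac{d}{dz}\bigl[z\,\mathcal{E}_{1,2}^{\gamma}(z)\bigr]=\mathcal{E}_{1,1}^{\gamma}(z),
\end{equation}
which I would establish by termwise differentiation of the series~\eqref{ML3}: writing $z\mathcal{E}_{1,2}^{\gamma}(z)=\sum_{n\ge0}\frac{\Gamma(\gamma+n)}{\Gamma(\gamma) n!\,\Gamma(n+2)}z^{n+1}$ and differentiating gives $\sum_{n\ge0}\frac{(n+1)\Gamma(\gamma+n)}{\Gamma(\gamma) n!\,\Gamma(n+2)}z^{n}=\sum_{n\ge0}\frac{\Gamma(\gamma+n)}{\Gamma(\gamma) n!\,\Gamma(n+1)}z^{n}$, which is exactly $\mathcal{E}_{1,1}^{\gamma}(z)$.

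Given \eqref{eq:keyML}, equation~\eqref{eq2} of the theorem falls out by a direct product-rule computation. Setting $c=\lambda\beta/(\lambda_1+\beta)$, the factor $t\mathcal{E}_{1,2}^{k+1}(ct)$ in $p_k^E(t)$ has derivative $\mathcal{E}_{1,1}^{k+1}(ct)$ by \eqref{eq:keyML} and the chain rule, so the $e^{-\lambda t}$ contribution produces $-\lambda p_k^E(t)$ and the remaining term equals
\[
e^{-\lambda t}\frac{\lambda_1^k\lambda\beta}{(\lambda_1+\beta)^{k+1}}\mathcal{E}_{1,1}^{k+1}\!\left(\frac{\lambda\beta t}{\lambda_1+\beta}\right).
\]
A one-line comparison with the explicit form of $\hat p_k^I(t)=p_k^I(t,\beta,\lambda,\lambda_1)$ shows that this is precisely $\frac{\lambda}{k+1}\hat p_k^I(t)$, since the $(k+1)$ in $\hat p_k^I$ cancels against the $1/(k+1)$, and the parameter swap $\lambda\leftrightarrow\beta$ in $\hat p_k^I$ leaves the denominator $(\lambda_1+\beta)^{k+1}$ and the Mittag-Leffler argument $\lambda\beta t/(\lambda_1+\beta)$ unchanged. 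Equation~\eqref{eq1} is obtained by exactly the same computation applied to $\hat p_k^E(t)=p_k^E(t,\beta,\lambda,\lambda_1)$, and equation~\eqref{eq3} is then immediate: when $\lambda=\beta$ the notational distinction between the original and the hatted probabilities disappears, so $\hat p_k^I(t)=p_k^I(t)$ and \eqref{eq2} collapses to \eqref{eq3}.

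I do not expect any real obstacle here; the argument is bookkeeping. The only point that requires attention is the swap of the parameters $\lambda$ and $\beta$ hidden in the hat notation \eqref{pkehat}, \eqref{pkihat}, so that one correctly matches the coefficient and the Mittag-Leffler argument on the right-hand side after differentiation. Verifying the identity \eqref{eq:keyML} is elementary, and once it is in hand the three stated equations are obtained with no further analytic input.
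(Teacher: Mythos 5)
Your proposal is correct and follows essentially the same route as the paper: differentiate the closed-form expressions from Theorems \ref{theorem2} and \ref{theorem5}, use the identity $\frac{d}{dz}\bigl[z\,\mathcal{E}_{1,2}^{\gamma}(z)\bigr]=\mathcal{E}_{1,1}^{\gamma}(z)$ (which the paper states as "easily checked" and you verify termwise), and match the resulting term with the hatted/unhatted inverse-process probabilities, with \eqref{eq3} following by setting $\lambda=\beta$. The parameter bookkeeping in your matching of $\frac{\lambda}{k+1}\hat p_k^{I}(t)$ (and symmetrically for \eqref{eq1}) is handled correctly, so no gap remains.
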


\begin{proof} Firstly we represent the derivative $\frac
{d}{dt}%
\hat{p}_{k}^{E}(t)$ in the form:
\begin{equation}
\frac{d}{dt}\hat{p}_{k}^{E}(t)=-\beta
\hat{p}_{k}^{E}(t)+e^{-\beta
t}\frac{%
\lambda_{1}^{k}}{(\lambda_{1}+\lambda)^{k}}
\frac{d}{dt}%
 \biggl( \frac{\lambda\beta t}{\lambda_{1}+\lambda}\mathcal{E}%
_{1,2}^{k+1}
\biggl( \frac{\lambda\beta t}{\lambda_{1}+\lambda} \biggr) \biggr) . \label{ddtpke}
\end{equation}
Next we notice that the following relation holds:
\[
\frac{d}{dz} \bigl( az\mathcal{E}_{1,2}^{\gamma} ( az )
\bigr) =a%
\mathcal{E}_{1,1}^{\gamma} ( az ) ,
\]
which can be easily checked by direct calculations.

Therefore, (\ref{ddtpke}) can be written as
\begin{equation}
\frac{d}{dt}\hat{p}_{k}^{E}(t)=-\beta
\hat{p}_{k}^{E}(t)+e^{-\beta
t}\frac{%
\lambda_{1}^{k}\lambda\beta}{(\lambda_{1}+\lambda)^{k+1}}
\mathcal {E}%
_{1,1}^{k+1} \biggl( \frac{\lambda\beta t}{\lambda_{1}+\lambda}
\biggr) . \label{ddtpke1}
\end{equation}
Comparing the second term in the r.h.s. of (\ref{ddtpke1}) with the
expression for $p_{k}^{I}(t)$, we come to (\ref{eq1}). With analogous
reasonings we derive (\ref{eq2}), and taking $\lambda=\beta$ in (\ref
{eq1}%
) or in (\ref{eq2}), we obtain (\ref{eq3}).
\end{proof}

\begin{remark}\label{remark11} We note that in addition to \eqref{eq1}--\eqref{eq3}
some other relations can be written,
in particular, the probabilities $p_{k}^{I}(t)$ can be
represented recursively via ${p}_{k}^{E}(t)$. If $\lambda=\beta
$, from \eqref{eq3} and \eqref{dpkNEN} we deduce:
\begin{equation*}
p_{k}^{I}(t)=(k+1)\frac{\beta}{\lambda_{1}+\beta}
\Biggl[p_{k}^{E}(t)+\sum_{m=1}^{k}
\biggl( \frac{\lambda_{1}}{\lambda
_{1}+\beta
} \biggr) ^{m}p_{k-m}^{E}(t)
\Biggr].
\end{equation*}
\end{remark}

Consider now Skellam  processes with time change, where the role of
time is
played by the inverse process $Y(t)$ given by (\ref{Y1}).

Let the Skellam process $S(t)$ have parameters $\lambda_{1}$ and
$\lambda
_{2}$ and let us consider the process
\begin{equation}
S_{{I}}(t)=S\bigl(Y(t)\bigr)=N_{1}\bigl(Y(t)
\bigr)-N_{2}\bigl(Y(t)\bigr), \label{SIY}
\end{equation}
where $N_{1}(t)$, $N_{2}(t)$ and $Y(t)$ are independent.

\begin{theorem}\label{theorem7}
Let $S_{{I}}(t)$ be a Skellam process of type I given by
(\ref{SIY}), then the probabilities $r_{k}(t)=P(S_{{I}}(t)=k),k\in Z$,
are given by
\[
r_{k}(t)=\lambda e^{-\beta t} \biggl( \frac{\lambda_{1}}{\lambda
_{2}}
\biggr) ^{\frac{k}{2}}\int_{0}^{\infty}e^{-u ( \lambda_{1}+\lambda
_{2}+\lambda ) }I_{|k|}
( 2u\sqrt{\lambda_{1}\lambda _{2}} ) I_{0} ( 2
\sqrt{\lambda\beta ut} ) du.
\]

The moment generating function is given by
\begin{equation}
\mathsf{E}e^{\theta S_{I}(t)}=\frac{\lambda}{\lambda+f_{s}(-\theta
)\,}%
e^{-\beta tf_{s}(-\theta)(\lambda+f_{s}(-\theta))^{-1}},
\end{equation}
for $\theta$ such that $\lambda+f_{s}(-\theta)\neq0$, where $%
f_{s}(\theta)$ is the Laplace transform of the initial Skellam
process $%
S(t)$.
\end{theorem}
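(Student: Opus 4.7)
The plan is to mirror the proof of Theorem \ref{theorem3}, replacing the density of $G_N(t)$ with the density $h(s,t)$ of the inverse process $Y(t)$ from \eqref{Ydensity}, and using the Laplace transform \eqref{LaplaceForInverse} for computing the moment generating function. Both steps rely on conditioning on the value of the time-change, together with the independence of $N_1$, $N_2$ and $Y$.

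For the probability mass function, I would first write
\[
r_k(t)=P\bigl(S\bigl(Y(t)\bigr)=k\bigr)=\int_0^\infty s_k(u)\,h(u,t)\,du,
\]
where $s_k(u)=P(S(u)=k)$ is given by \eqref{skellamdistr} and $h(u,t)$ is the density in \eqref{Ydensity}. Substituting $h(u,t)=\lambda e^{-\lambda u-\beta t} I_0(2\sqrt{\lambda\beta ut})$ directly yields the claimed integral representation for $r_k(t)$, with the exponential factor $e^{-u(\lambda_1+\lambda_2+\lambda)}$ appearing as the product of $e^{-u(\lambda_1+\lambda_2)}$ from $s_k(u)$ and $e^{-\lambda u}$ from $h(u,t)$.

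For the moment generating function, the strategy is to condition on $Y(t)$ and exploit that $S(u)$ is a L\'evy process with Bern\v{s}tein function $f_s$:
\[
\mathsf{E}e^{\theta S_I(t)}=\int_0^\infty \mathsf{E}e^{\theta S(u)}\,h(u,t)\,du
=\mathsf{E}\,e^{-f_s(-\theta)\,Y(t)}.
\]
Applying formula \eqref{LaplaceForInverse} with $\theta$ replaced by $f_s(-\theta)$ immediately gives
\[
\mathsf{E}e^{\theta S_I(t)}=\frac{\lambda}{\lambda+f_s(-\theta)}\,
\exp\!\left(-\beta t\,\frac{f_s(-\theta)}{\lambda+f_s(-\theta)}\right),
\]
valid whenever $\lambda+f_s(-\theta)\neq 0$, as stated.

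There is no real obstacle here; the argument is a transparent application of standard conditioning for subordinated L\'evy processes. The only point that requires a small comment is justifying the interchange of expectation and integration when producing $\mathsf{E}e^{-f_s(-\theta)Y(t)}$, which is immediate since the Skellam moment generating function is finite for all real $\theta$ and $Y(t)$ has a proper density. The rest of the proof is bookkeeping of the elementary Bessel series, exactly parallel to the verification of \eqref{LaplaceForInverse} carried out earlier.
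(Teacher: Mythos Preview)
Your proposal is correct and follows essentially the same approach as the paper's proof: condition on $Y(t)$, substitute the Skellam pmf \eqref{skellamdistr} and the density \eqref{Ydensity} to get $r_k(t)$, and for the mgf write $\mathsf{E}e^{\theta S_I(t)}=\mathsf{E}e^{-f_s(-\theta)Y(t)}$ and apply \eqref{LaplaceForInverse}. The paper additionally writes out the final mgf explicitly in terms of $\lambda_1,\lambda_2$, but otherwise the arguments coincide.
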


\begin{proof} Using conditioning arguments, we write:
\[
r_{k}(t)=\int_{0}^{\infty}s_{k}(u)h(u,t)du,
\]
and then we insert the expressions for $s_{k}(u)$ and $h(u,t)$, which are
given by formulas (\ref{skellamdistr}) and (\ref{Ydensity}) correspondingly.
The moment generating function can be calculated as follows:
\begin{align*}
\mathsf{E}e^{\theta S_{I}(t)} &=\int_{0}^{\infty}
\mathsf{E}e^{\theta
S(t)}h(u,t)du=\mathsf{E}e^{-f_{s}(-\theta)Y(t)}
\\
&=\frac{\lambda}{\lambda+f_{s}(-\theta)\,}e^{-\beta tf_{s}(-\theta
)(\lambda+f_{s}(-\theta))^{-1}}
\\
&=\frac{\lambda}{\lambda+\lambda_{1}+\lambda_{2}-\lambda
_{1}e^{\theta
}-\lambda_{2}e^{-\theta}\,}\exp \biggl( -\beta t\frac{\lambda
_{1}+\lambda
_{2}-\lambda_{1}e^{\theta}-\lambda_{2}e^{-\theta}}{\lambda+\lambda
_{1}+\lambda_{2}-\lambda_{1}e^{\theta}-\lambda_{2}e^{-\theta}\,
} \biggr)
\end{align*}
for $\theta$ such that $\lambda+\lambda_{1}+\lambda_{2}-\lambda
_{1}e^{\theta}-\lambda_{2}e^{-\theta}\neq0$.
\end{proof}

\begin{remark}\label{remark12}
The expressions for mean, variance and covariance function for the
Skellam process $S(Y(t))$
can be calculated analogously to corresponding calculations for the
process $S(G_N(t))$ (see Remark \ref{remark7}). We obtain:\vadjust{\goodbreak}
\begin{align*}
\mathsf{E}S\bigl(Y(t)\bigr) &= (\lambda_1-\lambda_2 )
\frac{\beta
t+1}{\lambda};\\
\mathsf{Var}S\bigl(Y(t)\bigr) &= (\lambda_1-\lambda_2
)^2\frac{2\beta
t+1}{\lambda^2}+ (\lambda_1+
\lambda_2 )\frac{\beta t+1}{\lambda};\\
\mathsf{Cov}\bigl(S\bigl(Y(t)\bigr),S\bigl(Y(s)\bigr)\bigr) &= (
\lambda_1-\lambda_2 )^2\frac
{2\beta\min(t,s)+1}{\lambda^2}+
(\lambda_1+\lambda_2 )\frac
{\beta\min(t,s)+1}{\lambda}.
\end{align*}
\end{remark}

Consider the time-changed Skellam process of type II:
\begin{equation}
S_{\mathit{II}}(t)=N_{1}\bigl(Y_{1}(t)
\bigr)-N_{2}\bigl(Y_{2}(t)\bigr), \label{SIIY}
\end{equation}
where $Y_{1}(t)$ and $Y_{2}(t)$ are independent copies of the inverse
process $Y(t)$ and independent of $N_{1}(t)$, $N_{2}(t)$.

\begin{theorem}\label{theorem8}
Let $S_{\mathit{II}}(t)$ be the time-changed Skellam process of type II
given by (\ref{SIIY}). Its probability mass function is given by
\begin{align*}
P\bigl(S_{\mathit{II}}(t)=k\bigr) &=e^{-2\beta t}\frac{\lambda^{2}\lambda
_{1}^{k}}{(\lambda
_{1}+\lambda)^{k+1}(\lambda_{2}+\lambda)}\sum
_{n=0}^{\infty}\frac
{ (
\lambda_{1}\lambda_{2} ) ^{n}(n+k+1)(n+1)}{(\lambda_{1}+\lambda
)^{n}(\lambda_{2}+\lambda)^{n}}
\\
&\quad\times\,\mathcal{E}_{1,1}^{n+k+1} \biggl( \frac{\lambda\beta
t}{\lambda
_{1}+\lambda}
\biggr) \mathcal{E}_{1,1}^{n+1} \biggl( \frac{\lambda
\beta t}{%
\lambda_{2}+\lambda}
\biggr)
\end{align*}
for $k\in Z$, $k\geq0$, and when $k<0$
\begin{align*}
P\bigl(S_{\mathit{II}}(t)=k\bigr) &=e^{-2\beta t}\frac{\lambda^{2}\lambda_{2}^{ \llvert
k \rrvert  }}{(\lambda_{1}+\lambda)(\lambda_{2}+\lambda)^{ \llvert
k \rrvert
+1}}\sum
_{n=0}^{\infty}\frac{ ( \lambda_{1}\lambda_{2} )
^{n}(n+k+1)(n+1)}{(\lambda_{1}+\lambda)^{n}(\lambda_{2}+\lambda
)^{n}}
\\
&\quad\times\,\mathcal{E}_{1,1}^{n+1} \biggl( \frac{\lambda\beta t}{\lambda
_{1}+\beta}
\biggr) \mathcal{E}_{1,1}^{n+ \llvert  k \rrvert  +1} \biggl( \frac{%
\lambda\beta t}{\lambda_{2}+\beta}
\biggr) .
\end{align*}
The moment generating function is given by
\begin{align*}
Ee^{\theta S_{\mathit{II}}(t)} &=\frac{\lambda^{2}}{ ( \lambda+\lambda
_{1}(1-e^{-\theta}) )  ( \lambda+\lambda_{2}(1-e^{\theta
}) ) \,}
\\
&\quad\times\exp \biggl( -\beta t\frac{\lambda_{1}(1-e^{-\theta
})}{\lambda
+\lambda_{1}(1-e^{-\theta})\,} \biggr) \exp \biggl( -\beta t
\frac
{\lambda
_{2}(1-e^{\theta})}{\lambda+\lambda_{2}(1-e^{\theta})\,} \biggr)
\end{align*}
for $\theta$ such that $\lambda+\lambda_{2}(1-e^{\theta})\neq
0$.
\end{theorem}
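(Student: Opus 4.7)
The plan is to mimic the proof of Theorem~\ref{theorem4}, exploiting the independence of $N_1(Y_1(t))$ and $N_2(Y_2(t))$. Since $Y_1$ and $Y_2$ are independent copies of $Y$ and are also independent of $N_1,N_2$, the two summands $N_1(Y_1(t))$ and $N_2(Y_2(t))$ are independent Poisson-subordinated processes whose marginal laws are known from Theorem~\ref{theorem5}.

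First, I would write
\[
P\bigl(S_{\mathit{II}}(t)=k\bigr)=\sum_{n=0}^{\infty}P\bigl(N_{1}(Y_{1}(t))=n+k\bigr)P\bigl(N_{2}(Y_{2}(t))=n\bigr)\,I_{\{k\geq 0\}}
\]
and analogously for $k<0$ (swapping the roles), exactly as in \eqref{proofSII}. Then I would substitute the closed form \eqref{pkNY} from Theorem~\ref{theorem5} applied twice, once with intensity $\lambda_{1}$ (and index $n+k$, so a factor $(n+k+1)$ appears) and once with intensity $\lambda_{2}$ (and index $n$, so a factor $(n+1)$ appears). Pulling the exponential factors out of the sum yields $e^{-2\beta t}$ and the prefactor $\lambda^{2}\lambda_{1}^{k}/((\lambda_{1}+\lambda)^{k+1}(\lambda_{2}+\lambda))$, leaving precisely the series stated in the theorem. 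The $k<0$ case follows by the same bookkeeping with $|k|$ in place of $k$ and the indices of $N_{1}$ and $N_{2}$ swapped.

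For the moment generating function I would again use independence to factorise
\[
\mathsf{E}e^{\theta S_{\mathit{II}}(t)}=\mathsf{E}e^{\theta N_{1}(Y_{1}(t))}\,\mathsf{E}e^{-\theta N_{2}(Y_{2}(t))},
\]
and then apply the Laplace transform formula \eqref{26} from Theorem~\ref{theorem5} to each factor, with $-\theta$ substituted for $\theta$ in the second factor. The product of the two resulting expressions reproduces exactly the formula claimed, subject to the nondegeneracy condition $\lambda+\lambda_{2}(1-e^{\theta})\neq 0$ (and the symmetric condition for the first factor, which is implicit).

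The only nontrivial step is matching the combinatorial factors: the product of the two probabilities from Theorem~\ref{theorem5} produces the coefficient $(n+k+1)(n+1)$ that appears in the series, together with the correct powers of $\lambda_{1},\lambda_{2},\lambda_{1}+\lambda,\lambda_{2}+\lambda$. I expect no genuine obstacle here beyond careful indexing; the use of the three-parameter Mittag-Leffler series $\mathcal{E}_{1,1}^{\gamma}$ handles the infinite sums that would otherwise appear from the Bessel-function representation of the density $h(s,t)$.
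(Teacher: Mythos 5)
Your proposal is correct and follows essentially the same route as the paper: the paper's proof likewise writes $P(S_{\mathit{II}}(t)=k)$ via the convolution decomposition \eqref{proofSII}, inserts the probabilities \eqref{pkNY} from Theorem~\ref{theorem5}, and obtains the moment generating function as the product $\mathsf{E}e^{\theta N_{1}(Y_{1}(t))}\mathsf{E}e^{-\theta N_{2}(Y_{2}(t))}$ using \eqref{26}. Your bookkeeping of the factors $(n+k+1)(n+1)$ and the prefactors matches the stated result, so there is no gap.
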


\begin{proof} Analogously to the proof of Theorem \ref{theorem4} we
write the expression for\break $P(S_{\mathit{II}}(t)=k)$ in the form
(\ref{proofSII}) and then we use the expressions for probabilities
$P(N_{1}(Y(t))=k)$ from Theorem \ref{theorem5}. The moment generating function
is obtained as the product: $\mathsf{E}e^{\theta
S_{\mathit{II}}(t)}=\mathsf{E}e^{\theta
N_{1}(Y_{1}(t))}\mathsf{E}e^{-\theta N_{2}(Y_{2}(t))}$, and then
we use expression~(\ref{26}).
\end{proof}

\begin{remark}\label{remark13}
The moments of $S_{\mathit{II}}(t)$ can be calculated using the moment
generating function given in Theorem \ref{theorem8},
or using independence of processes $N_i(Y_i(t)), i=1,2$, and
corresponding expressions for the moments of $N_i(Y_i(t)), i=1,2$.
In view of mutual independence of $N_1(Y_1(t))$ and $N_2(Y_2(t))$ and
using the formula \eqref{covY}, we obtain the covariance function:\vadjust{\goodbreak}
\begin{align*}
\mathsf{Cov} \bigl(S_{\mathit{II}}(t), S_{\mathit{II}}(s) \bigr)&= \mathsf{Cov}
\bigl(N_1\bigl(Y_1(t)\bigr),N_1
\bigl(Y_1(s)\bigr) \bigr)
+\mathsf{Cov} \bigl(N_2\bigl(Y_2(t)\bigr),N_2
\bigl(Y_2(s)\bigr) \bigr)
\\
&=\frac{\lambda_1+\lambda_2}{\lambda} \bigl(\beta\min(t,s)+1 \bigr)+\frac{\lambda_1^2+\lambda_2^2}{\lambda^2} \bigl(2
\beta \min(t,s)+1 \bigr).
\end{align*}
\end{remark}

\subsection{{Inverse compound Poisson--Erlang process}}\label{section5.2}

Consider now the compound Poisson--Erlang process $G_{N}^{ ( n )
}(t) $, that is, the Poisson-Gamma process with $\alpha=n$.

For this case the inverse process
\begin{equation}
Y^{(n)}(t)=\inf\bigl\{u\geq0;\text{ }G_{N}^{ ( n ) }(u)>t
\bigr\},\quad%
t\geq0, \label{Yn}
\end{equation}
has density of the following form:
\begin{equation}
q(s,t)=\lambda e^{-\beta t-\lambda s}\sum_{k=1}^{n}
( \beta t ) ^{k-1}\varPhi \bigl( n,k,\lambda s ( \beta t )
^{n} \bigr) . \label{hYn}
\end{equation}
The formula for the density (\ref{hYn}) (in different set of
parameters) was obtained in \cite{PV} (Theorem 3.1) by developing
the approach presented in \cite{Cox}. This approach is based on
calculating and inverting the Laplace transforms, by taking into
account the relationship between Laplace transforms of a direct
process and its inverse process. We refer for details to
\cite{Cox,PV} (see also Appendix \ref{appendixa.2}). It should be noted that
for the compound
Poisson-Gamma processes with a non-integer parameter $\alpha$ the
inverting Laplace transforms within this approach leads to
complicated infinite integrals (see again \cite{PV}).

Laplace transform of the process $Y^{(n)}(t)$ can be represented in the
following form:
\begin{align*}
\mathsf{E}\bigl[e^{-\theta Y^{(n)}(t)}\bigr] &=e^{-\beta
t}\sum
_{k=1}^{n}\sum_{j=0}^{\infty}
\frac{ ( \beta t )
^{nj+k-1}}{%
\varGamma(nj+k)} \biggl( \frac{\lambda}{\lambda+\theta} \biggr) ^{j+1}
\\
&=e^{-\beta t}\frac{\lambda}{\lambda+\theta}\sum_{k=1}^{n}
( \beta t ) ^{k-1}\mathcal{E}_{n,k} \biggl(
\frac{\lambda ( \beta
t )
^{n}}{\lambda+\theta} \biggr)
\end{align*}
(see (\ref{A.4}) in Appendix \ref{appendixa.2}).
With direct calculations, using the known form of the density of
$Y^{ (
n ) }(t)$, we find the following expressions for the moments:
\begin{align*}
\mathsf{E}Y^{(n)}(t) &=e^{-\beta t}\frac{1}{\lambda}\sum
_{k=1}^{n} ( \beta t ) ^{k-1}
\mathcal{E}_{n,k}^{2} \bigl( ( \beta t ) ^{n}
\bigr) ,
\\
\mathsf{E} \bigl[ Y^{(n)}(t) \bigr] ^{2}
&=e^{-\beta t}\frac
{2}{\lambda^{2}}%
\sum
_{k=1}^{n} ( \beta t ) ^{k-1}
\mathcal{E}_{n,k}^{3} \bigl( ( \beta t ) ^{n}
\bigr) ,
\end{align*}
and, generally, for $p\geq1$:
\[
\mathsf{E} \bigl[ Y^{(n)}(t) \bigr] ^{p}=e^{-\beta t}
\frac{p!}{\lambda
^{p}}%
\sum_{k=1}^{n}
( \beta t ) ^{k-1}\mathcal{E}_{n,k}^{p+1} \bigl( (
\beta t ) ^{n} \bigr)
\]
(see details of calculations in Appendix \ref{appendixa.2}).

\begin{remark}\label{remark14}
Using the arguments similar to those in \cite{PV} (see proof of
Lemma 3.11 therein), we can also derive another expression for the
first moment, in terms of the\vadjust{\goodbreak} two-parameter generalized
Mittag-Leffler function:
\begin{equation}
\mathsf{E}Y^{(n)}(t)=\frac{\beta t}{n\lambda}+\frac{e^{-\beta
t}}{n\lambda}%
\sum_{k=1}^{n} ( n+k-1 ) ( \beta t )
^{k-1}\mathcal {E}%
_{n,k} \bigl( ( \beta t )
^{n} \bigr) . \label{EYn}
\end{equation}
From (\ref{EYn}) we can see that $\mathsf{E}Y^{(n)}(t)$ has a linear behavior
with respect to $t$ as $t\rightarrow\infty$:
\begin{equation}
\mathsf{E}Y^{(n)}(t)\sim\frac{\beta t}{n\lambda
}+\frac{n+1}{2n\lambda},
\label{EYnasymp}
\end{equation}
%
which should be indeed expected, since the general result holds for
subordinators with finite mean: the mean of their first passage time exhibit
linear behavior for large times (see, for example, \cite{VT} and references
therein).

The details of derivation of (\ref{EYn}) and (\ref{EYnasymp}) are
presented in Appendix \ref{appendixa.2}.
\end{remark}

Consider the time-changed process $Z^{ ( n )
}(t)=N_{1}(Y^{ (
n ) }(t))$, where $Y^{ ( n ) }(t)$ is the inverse process
given by (\ref{Yn}), independent of $N_{1}(t)$.

\begin{theorem}\label{theorem9}
The probability mass function of the process $Z^{ ( n )
}(t)=N_{1}(Y^{ ( n ) }(t))$ is given by
\begin{equation}
p_{k}(t)=P\bigl(Z^{ ( n ) }(t)=k\bigr)=e^{-\beta t}
\frac{\lambda\lambda
_{1}^{k}(k+1)}{(\lambda_{1}+\lambda)^{k+1}}\sum_{m=1}^{n} ( \beta t
) ^{m-1}\mathcal{E}_{n,m}^{k+1} \biggl(
\frac{\lambda ( \beta
t ) ^{n}}{\lambda_{1}+\lambda} \biggr) ,
\end{equation}
Laplace transform is given by
\[
E \bigl[ e^{-\theta Z^{ ( n ) }(t)} \bigr] =e^{-\beta t}\frac{%
\lambda}{\lambda+\lambda_{1}(1-e^{-\theta})\,}\sum
_{k=1}^{n} ( \beta t ) ^{k-1}
\mathcal{E}_{n,k} \biggl( \frac{\lambda ( \beta
t )
^{n}}{\lambda+\lambda_{1}(1-e^{-\theta})} \biggr) .
\]
\end{theorem}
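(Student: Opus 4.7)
The plan is to follow exactly the conditioning strategy used for the earlier time-changed Poisson processes (cf.\ the proofs of Theorems~\ref{theorem1}, \ref{theorem2} and especially \ref{theorem5}), since everything we need about $Y^{(n)}(t)$ has just been established in Section~\ref{section5.2}: its density $q(s,t)$ is given explicitly by~\eqref{hYn}, and its Laplace transform is already available in the displayed formulas preceding Theorem~\ref{theorem9}. The probability mass function will come out of
\[
p_k(t) = \int_{0}^{\infty} P\bigl(N_{1}(s)=k\bigr)\, q(s,t)\, ds,
\]
which is the usual subordination identity, justified since $N_1$ and $Y^{(n)}$ are independent and $Y^{(n)}$ is absolutely continuous on $(0,\infty)$.

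For the PMF formula, I would substitute $P(N_1(s)=k) = e^{-\lambda_1 s}(\lambda_1 s)^k/k!$ together with~\eqref{hYn}, pull the $m$-sum and the factors independent of $s$ outside the integral, and reduce the problem to computing
\[
\int_{0}^{\infty} e^{-(\lambda_1+\lambda)s}\, s^{k}\, \varPhi\bigl(n,m,\lambda s (\beta t)^{n}\bigr)\, ds
\]
for each $m=1,\ldots,n$. Expanding $\varPhi(n,m,\cdot)$ by its defining series~\eqref{Wr}, interchanging sum and integral (Tonelli applies because every term is nonnegative), and using $\int_0^\infty s^{k+j} e^{-(\lambda_1+\lambda)s}\, ds = \Gamma(k+j+1)/(\lambda_1+\lambda)^{k+j+1}$, the inner integral becomes
\[
\frac{1}{(\lambda_1+\lambda)^{k+1}} \sum_{j=0}^{\infty} \frac{\Gamma(k+j+1)}{j!\,\Gamma(nj+m)}\left(\frac{\lambda (\beta t)^{n}}{\lambda_1+\lambda}\right)^{j}.
\]
The final step is to recognize the Pochhammer-type factor $\Gamma(k+j+1)/\Gamma(k+1)$ hidden inside this series and rewrite the sum as $\Gamma(k+1)\,\mathcal{E}_{n,m}^{k+1}\bigl(\lambda(\beta t)^n/(\lambda_1+\lambda)\bigr)$, using the definition~\eqref{ML3} of the three-parameter Mittag-Leffler function; combining constants then produces the stated closed form for $p_k(t)$.

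For the Laplace transform I would avoid recomputing the double series and instead argue functionally: by independence and conditioning,
\[
\mathsf{E}\bigl[e^{-\theta Z^{(n)}(t)}\bigr]
= \int_{0}^{\infty} \mathsf{E}\bigl[e^{-\theta N_1(s)}\bigr]\, q(s,t)\, ds
= \mathsf{E}\bigl[e^{-\lambda_{1}(1-e^{-\theta})\,Y^{(n)}(t)}\bigr],
\]
so I can just substitute $\theta \mapsto \lambda_1(1-e^{-\theta})$ in the formula for $\mathsf{E}[e^{-\theta Y^{(n)}(t)}]$ that was derived just before the statement of Theorem~\ref{theorem9}; this immediately yields the claimed expression.

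I do not anticipate any genuine obstacle. The only delicate point is purely bookkeeping: matching the combinatorial factor $\Gamma(k+j+1)$ produced by the Gamma integral with the factor $\Gamma(\gamma+j)/\Gamma(\gamma)$ appearing in the definition of $\mathcal{E}_{n,m}^{\gamma}$ with $\gamma=k+1$, and being careful with the dummy indices so that the $m$-sum from~\eqref{hYn} survives intact in the final expression. Once this identification is made, both the PMF and the Laplace transform drop out with no further analytic work.
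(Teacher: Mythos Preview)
Your proposal is correct and follows essentially the same route as the paper: the paper also conditions on $Y^{(n)}(t)$, substitutes the density~\eqref{hYn}, expands $\varPhi(n,m,\cdot)$ termwise, evaluates the resulting Gamma integrals $\int_0^\infty e^{-(\lambda_1+\lambda)s}s^{k+l}\,ds$, and then identifies the series with $\mathcal{E}_{n,m}^{k+1}$; for the Laplace transform the paper likewise refers back to the argument of Theorem~\ref{theorem5}, i.e.\ the substitution $\theta\mapsto\lambda_1(1-e^{-\theta})$ in the already computed Laplace transform of $Y^{(n)}(t)$.
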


\begin{proof} \xch{Proof is similar}{is similar} to that for Theorem \ref{theorem5}. In
particular, the probability mass function is obtained as follows:
\begin{align*}
\hspace*{20pt}p_{k}(t) &=P\bigl(N_{1}\bigl(Y^{ ( n ) }(t)\bigr)=k
\bigr)
\\
&=\int_{0}^{\infty}e^{-\lambda_{1}s}
\frac{ ( \lambda_{1}s
) ^{k}%
}{k!}\lambda e^{-\lambda s-\beta t}\sum_{m=1}^{n}
( \beta t ) ^{m-1}\varPhi \bigl( n,m,\lambda s ( \beta t )
^{n} \bigr) ds
\\
&=\int_{0}^{\infty}e^{-\lambda_{1}s-\lambda s-\beta t}\lambda
\frac{
 ( \lambda_{1}s ) ^{k}}{k!}\sum_{m=1}^{n} ( \beta t
) ^{m-1}\sum_{l=0}^{\infty}
\frac{ ( \lambda s ( \beta t )
^{n} ) ^{l}}{l!\varGamma ( nl+m ) }ds
\\
&=\frac{e^{-\beta t}\lambda\lambda_{1}^{k}}{k!}\sum_{m=1}^{n} (
\beta t ) ^{m-1}\sum_{l=0}^{\infty}
\frac{ ( \lambda ( \beta
t ) ^{n} ) ^{l}}{l!\varGamma ( nl+m ) }\int_{0}^{\infty
}e^{-(\lambda_{1}+\lambda)s}s^{k+l}ds
\\
&=e^{-\beta t}\frac{\lambda\lambda_{1}^{k}(k+1)}{(\lambda
_{1}+\lambda
)^{k+1}}\sum_{m=1}^{n}
( \beta t ) ^{m-1}\mathcal{E}%
_{n,m}^{k+1}
\biggl( \frac{\lambda ( t\beta ) ^{n}}{\lambda
_{1}+\lambda} \biggr) .\hspace*{40pt}\qedhere
\end{align*}
\end{proof}

\begin{remark}\label{remark15}
The first two moments of the process $Z^{ ( n ) }(t)$ can be
calculated as follows:
\begin{align*}
\mathsf{E}Z^{ ( n ) }(t)&=\mathsf{E}N_{1}(1)
\mathsf{E}%
Y^{(n)}(t)=e^{-\beta t}\frac{\lambda_{1}}{\lambda}\sum
_{k=1}^{n} ( \beta t ) ^{k-1}
\mathcal{E}_{n,k}^{2} \bigl( ( \beta t ) ^{n}
\bigr) ,\\
\mathsf{E} \bigl[ Z^{ ( n ) }(t) \bigr] ^{2}&=
\mathsf{Var}%
N_{1}(1)U(t)- \bigl[ \mathsf{E}N_{1}(1)
\bigr] ^{2}\mathsf{E} \bigl[ Y(t) 
 \bigr] ^{2}\\
&=e^{-\beta t}\frac{\lambda_{1}}{\lambda}\sum_{k=1}^{n}
( \beta t ) ^{k-1}\mathcal{E}_{n,k}^{2} \bigl( (
\beta t ) ^{n} \bigr) -e^{-\beta t}\frac{\lambda_{1}^{2}}{\lambda}\sum
_{k=1}^{n} ( \beta t ) ^{k-1}
\mathcal{E}_{n,k}^{3} \bigl( ( \beta t ) ^{n}
\bigr) ,
\end{align*}
and we can see that, similarly to $\mathsf{E}Y^{(n)}(t)$, $\mathsf{E}%
Z^{ ( n ) }(t)$ has linear behavior as $t\rightarrow\infty$:
\[
\mathsf{E}Y^{(n)}(t)\sim\frac{\lambda_{1}\beta t}{n\lambda
}+\frac{\lambda_{1} ( n+1 ) }{2n\lambda}.
\]
\end{remark}

Let the Skellam process $S(t)$ have parameters $\lambda_{1}$ and
$\lambda
_{2}$ and let us consider the process
\begin{equation}
S_{{I}}^{ ( n ) }(t)=S\bigl(Y^{ ( n ) }(t)
\bigr)=N_{1}\bigl(Y^{ (
n ) }(t)\bigr)-N_{2}
\bigl(Y^{ ( n ) }(t)\bigr), \label{SIYn}
\end{equation}
where $N_{1}(t)$, $N_{2}(t)$ and $Y^{ ( n ) }(t)$ are independent.

\begin{theorem}\label{theorem10}
Let $S_{{I}}^{ ( n ) }(t)$ be a Skellam process of type I
given by (%
\ref{SIYn}), then the probabilities $r_{k}(t)=P(S_{{I}}^{ ( n )
}(t)=k)$, $k\in Z$, are given by
\begin{align*}
r_{k}(t) &=\lambda e^{-\beta t} \biggl( \frac{\lambda_{1}}{\lambda
_{2}}%
 \biggr) ^{\frac{k}{2}}\sum_{m=1}^{n} (
\beta t ) ^{m-1}\int_{0}^{\infty}e^{-u ( \lambda_{1}+\lambda_{2}+\lambda
 ) }I_{|k|}
( 2u\sqrt{\lambda_{1}\lambda_{2}} )
\\
&\quad\times\varPhi \bigl( n,m,\lambda u ( \beta t ) ^{n} \bigr) du.
\end{align*}

The moment generating function is given by
\[
E \bigl[ e^{\theta S_{{I}}^{ ( n ) }(t)} \bigr] =e^{-\beta
t}\frac{%
\lambda}{\lambda+f_{s}(-\theta)\,}\sum
_{k=1}^{n} ( \beta t ) ^{k-1}
\mathcal{E}_{n,k} \biggl( \frac{\lambda ( \beta t )
^{n}}{%
\lambda+f_{s}(-\theta)} \biggr)
\]
for $\theta$ such that $\lambda+f_{s}(-\theta)\neq0$, where $%
f_{s}(\theta)$ is the Laplace transform of the initial Skellam
process $%
S(t)$.
\end{theorem}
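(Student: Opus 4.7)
The plan is to mirror the proof of Theorem~\ref{theorem7}, simply replacing the inverse Poisson-exponential subordinator $Y(t)$ by the inverse Poisson--Erlang subordinator $Y^{(n)}(t)$. Since $N_{1}, N_{2}$ are independent of $Y^{(n)}$, and $S(u)=N_{1}(u)-N_{2}(u)$ is the ordinary Skellam process at time $u$, conditioning on $Y^{(n)}(t)=u$ gives
\begin{equation*}
r_{k}(t) = \int_{0}^{\infty} s_{k}(u)\, q(u,t)\, du,
\end{equation*}
where $s_{k}(u)$ is the Skellam pmf from \eqref{skellamdistr} and $q(u,t)$ is the density of $Y^{(n)}(t)$ from \eqref{hYn}. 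Substituting both expressions and pulling the common factor $\lambda e^{-\beta t}(\lambda_{1}/\lambda_{2})^{k/2}$ together with the finite sum over $m$ outside the integral produces the stated formula for $r_{k}(t)$.

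For the moment generating function, the same conditioning argument yields
\begin{equation*}
\mathsf{E} e^{\theta S_{I}^{(n)}(t)} = \int_{0}^{\infty} \mathsf{E} e^{\theta S(u)}\, q(u,t)\, du = \mathsf{E} e^{-f_{s}(-\theta)\, Y^{(n)}(t)},
\end{equation*}
where in the last step I use $\mathsf{E} e^{\theta S(u)} = e^{-u f_{s}(-\theta)}$. This is precisely the Laplace transform of $Y^{(n)}(t)$ evaluated at $f_{s}(-\theta)$; inserting the closed-form expression for $\mathsf{E} e^{-\theta Y^{(n)}(t)}$ derived just above the theorem statement, with $\theta$ replaced by $f_{s}(-\theta)$, immediately gives the claimed identity, valid for those $\theta$ such that $\lambda + f_{s}(-\theta) \neq 0$.

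Since the steps amount to routine conditioning together with substitution of ingredients already established earlier in the paper, there is no substantive technical obstacle. The only items to check are (i) the Fubini-type interchange of the integral with the infinite series defining $\Phi(n,m,\cdot)$ and with the series defining $I_{|k|}$, which is legitimate because all summands are nonnegative and the exponential decay in $u$ dominates the polynomial growth inside $\Phi(n,m,\lambda u (\beta t)^{n})$ and $I_{|k|}(2u\sqrt{\lambda_{1}\lambda_{2}})$; and (ii) the restriction on $\theta$ so that $\lambda + f_{s}(-\theta) \neq 0$, which ensures the Laplace transform of $Y^{(n)}(t)$ is well-defined at that point and that the generalized Mittag-Leffler series converges.
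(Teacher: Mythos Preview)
Your proposal is correct and follows essentially the same approach as the paper, which simply states that the proof is analogous to that of Theorem~\ref{theorem7}: condition on $Y^{(n)}(t)$, insert the Skellam pmf \eqref{skellamdistr} and the density \eqref{hYn}, and for the moment generating function recognize $\mathsf{E}e^{-f_{s}(-\theta)Y^{(n)}(t)}$ and substitute the known Laplace transform of $Y^{(n)}(t)$. Your added remarks on the Fubini justification and the restriction on $\theta$ go slightly beyond what the paper spells out but are entirely in line with its reasoning.
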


\begin{proof} Proof is analogous to that of Theorem \ref{theorem7}.
\end{proof}

Consider the time-changed Skellam process of type II:
\begin{equation}
S_{\mathit{II}}^{ ( n ) }(t)=N_{1}\bigl(Y_{1}(t)
\bigr)-N_{2}\bigl(Y_{2}(t)\bigr), \label{SIIYn}
\end{equation}
where $Y_{1}(t)$ and $Y_{2}(t)$ are independent copies of the inverse
process $Y^{ ( n ) }(t)$ and independent of $N_{1}(t)$, $N_{2}(t)$.

\begin{theorem}\label{theorem11}
Let $S_{\mathit{II}}^{ ( n ) }(t)$ be the time-changed Skellam
process of type II given by (\ref{SIIYn}). Its probability mass
function is given by
\begin{align*}
P\bigl(S_{\mathit{II}}^{ ( n ) }(t)=k\bigr) &=e^{-2\beta t}
\frac{\lambda
^{2}\lambda
_{1}^{k}}{(\lambda_{1}+\lambda)^{k+1}(\lambda_{2}+\lambda)}%
\sum_{p=0}^{\infty}
\frac{ ( \lambda_{1}\lambda_{2} )
^{p}(p+k+1)(n+1)}{(\lambda_{1}+\lambda)^{p}(\lambda_{2}+\lambda
)^{p}}
\\
&\quad\times\sum_{m_{1}=1}^{n}\sum
_{m_{2}=1}^{n} ( \beta t ) ^{m_{1}+m_{2}-2}\,
\mathcal{E}_{n,m_{1}}^{p+k} \biggl( \frac{\lambda (
\beta t ) ^{n}}{\lambda_{1}+\lambda} \biggr)
\mathcal{E}%
_{n,m_{2}}^{p} \biggl( \frac{\lambda ( \beta t ) ^{n}}{\lambda
_{2}+\lambda}
\biggr)
\end{align*}
for $k\in Z$, $k\geq0$, and when $k<0$
\begin{align*}
P\bigl(S_{\mathit{II}}^{ ( n ) }(t)=k\bigr) &=e^{-2\beta t}
\frac{\lambda
^{2}\lambda
_{2}^{ \llvert  k \rrvert  }}{(\lambda_{1}+\lambda)(\lambda_{2}+\lambda
)^{ \llvert  k \rrvert  +1}}\sum_{p=0}^{\infty}
\frac{ ( \lambda
_{1}\lambda
_{2} ) ^{p}(p+k+1)(p+1)}{(\lambda_{1}+\lambda)^{p}(\lambda
_{2}+\lambda)^{p}}
\\
&\quad\times\sum_{m_{1}=1}^{n}\sum
_{m_{2}=1}^{n} ( \beta t ) ^{m_{1}+m_{2}-2}\,
\mathcal{E}_{n,m_{1}}^{p} \biggl( \frac{\lambda (
\beta
t ) ^{n}}{\lambda_{1}+\lambda} \biggr)
\mathcal {E}_{n,m_{2}}^{p+|k|}%
 \biggl(
\frac{\lambda ( \beta t ) ^{n}}{\lambda_{2}+\lambda} 
 \biggr) .
\end{align*}
The moment generating function is
\begin{align*}
E \bigl[ e^{\theta S_{\mathit{II}}(t)} \bigr] &=e^{-2\beta t}\frac{\lambda
}{\lambda
+\lambda_{1}(1-e^{\theta})\,}\sum
_{k=1}^{n} ( \beta t ) ^{k-1}%
\mathcal{E}_{n,k} \biggl( \frac{\lambda ( \beta t )
^{n}}{\lambda
+\lambda_{1}(1-e^{\theta})} \biggr)
\\
&\quad\times\frac{\lambda}{\lambda+\lambda_{2}(1-e^{-\theta})\,}%
\sum_{k=1}^{n}
( \beta t ) ^{k-1}\mathcal{E}_{n,k} \biggl(
\frac
{%
\lambda ( \beta t ) ^{n}}{\lambda+\lambda_{2}(1-e^{-\theta
})}%
 \biggr)
\end{align*}
for $\theta$ such that $\lambda+\lambda_{1}(1-e^{\theta})\neq
0$.
\end{theorem}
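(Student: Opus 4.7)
The plan is to follow the same strategy as in the proof of Theorem~\ref{theorem8}, exploiting the mutual independence of $N_1(Y_1(t))$ and $N_2(Y_2(t))$, where $Y_1,Y_2$ are independent copies of $Y^{(n)}$. The starting point is the same conditioning decomposition as in \eqref{proofSII}:
\begin{align*}
P\bigl(S_{II}^{(n)}(t)=k\bigr) &= \sum_{p=0}^{\infty} P\bigl(N_1(Y_1(t))=p+k\bigr)\, P\bigl(N_2(Y_2(t))=p\bigr)\, I_{\{k\ge 0\}}\\
&\quad + \sum_{p=0}^{\infty} P\bigl(N_1(Y_1(t))=p\bigr)\, P\bigl(N_2(Y_2(t))=p+|k|\bigr)\, I_{\{k<0\}}.
\end{align*}

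Into this I would substitute the explicit formula for $P(N_i(Y^{(n)}(t))=\cdot)$ from Theorem~\ref{theorem9}, which for each factor is of the form $e^{-\beta t}\lambda\lambda_i^{\cdot}(\cdot+1)(\lambda_i+\lambda)^{-(\cdot+1)}$ times a single finite sum over $m=1,\dots,n$ of $(\beta t)^{m-1}\mathcal{E}_{n,m}^{\cdot+1}(\lambda(\beta t)^n/(\lambda_i+\lambda))$. Multiplying the two such expressions, the two inner sums decouple and yield the double sum $\sum_{m_1=1}^n\sum_{m_2=1}^n(\beta t)^{m_1+m_2-2}$ that appears in the theorem statement; the $(\cdot+1)$ prefactors combine to give the $(p+k+1)(p+1)$ (respectively $(p+1)(p+|k|+1)$) coefficients in the series over $p$, and the prefactor $e^{-\beta t}$ squares to $e^{-2\beta t}$. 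For the moment generating function, I would use independence to factor
\[
\mathsf{E}e^{\theta S_{II}^{(n)}(t)}=\mathsf{E}e^{\theta N_1(Y_1(t))}\cdot\mathsf{E}e^{-\theta N_2(Y_2(t))},
\]
and then apply the Laplace transform formula of Theorem~\ref{theorem9} to each factor, at the arguments $-\theta$ and $\theta$ respectively, which gives the stated product form directly.

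The main bookkeeping obstacle will be in the pmf step: the product of the two three-parameter Mittag-Leffler series, each already containing a finite sum over $m$, produces a quadruply-indexed array whose terms must be reorganized into the claimed factorization (single series over $p$ times a double finite sum over $m_1,m_2$). This requires only matching the upper parameters of $\mathcal{E}_{n,m}^{\cdot}$ using the definition \eqref{ML3}, but care is needed to verify that the indices $p+k+1$ and $p+1$ (or $p+1$ and $p+|k|+1$) emerge in the correct positions on the two Mittag-Leffler factors. No new analytic input beyond Theorems~\ref{theorem4}, \ref{theorem8} and~\ref{theorem9} will be needed; the argument is purely computational.
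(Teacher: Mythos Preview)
Your proposal is correct and follows exactly the route the paper indicates: the paper's proof of Theorem~\ref{theorem11} simply reads ``Proof is analogous to that of Theorem~\ref{theorem8},'' i.e., use the convolution decomposition \eqref{proofSII} together with the pmf from Theorem~\ref{theorem9}, and factor the mgf by independence using the Laplace transform formula of Theorem~\ref{theorem9}. Your bookkeeping remark is apt---carrying out the substitution carefully you will obtain coefficients $(p+k+1)(p+1)$ and Mittag--Leffler upper indices $p+k+1$, $p+1$, which indicates that the printed statement contains minor typos (e.g.\ the factor ``$(n+1)$'' and the indices $p+k$, $p$), not a defect in your argument.
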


\begin{proof} Proof is analogous to that of Theorem \ref{theorem8}.
\end{proof}

\begin{remark}\label{remark16}
Covariance structure of the Skellam processes considered in this
section appears to be of complicated form and we postpone this
issue for future research.
\end{remark}

\section*{Acknowledgments}
The authors are grateful to the referees for their valuable comments
and suggestions which helped to improve the paper.

\appendix
\section{Appendix}

\subsection{{Proof of Lemma \ref{lemma1}: calculation of the
covariance function
of inverse compound Poisson-exponential process}}\label{appendixa.1}

Let $E_{N}(t)$ be the compound Poisson-exponential process with
parameters $%
\lambda,\beta$, that is, with Laplace exponent $f(u)=\lambda\frac{u}{
\beta+u}$. Consider the first passage time of $E_{N}(t)$:
\[
Y(t)=\inf\bigl\{u\geq0;E_{N}(u)>t\bigr\},\quad t\geq0.
\]

First two moments of $Y(t)$ and $\mathsf{Var} Y(t)$ are presented in
(\ref
{momentsY}) and can be directly calculated using the probability
density of $%
Y(t)$ given by (\ref{Ydensity}).

To calculate the covariance we need to find the expression for $\mathsf
{E}%
Y(t_{1})Y(t_{2})$.

We will use Theorem 3.1 from \cite{VT} which gives the expressions for the
Laplace transforms for $n$-th order moments of the inverse process in
terms of
Laplace transforms of lower-order moments.

Denote
\[
U(t_{1},t_{2},m_{1},m_{2})=
\mathsf{E}Y(t_{1})^{m_{1}}Y(t_{2})^{m_{2}},
\]
where $m_{1}$ and $m_{2}$ are positive integers; denote also $\mathsf{E}
Y(t_{1})=U(t_{1})$.

Let $\tilde{U}(u_{1},u_{2},m_{1},m_{2})$ be the Laplace transform of $%
U(t_{1},t_{2},m_{1},m_{2})$. Then, in these notations,
\[
\mathsf{E}Y(t_{1})Y(t_{2})=U(t_{1},t_{2},1,1)
\]
and from Theorem 3.1, formula (17) \cite{VT} we have:
\begin{align*}
\tilde{U}(u_{1},u_{2},1,1) &=\frac{1}{f(u_{1}+u_{2})} \bigl(
\tilde{U}%
(u_{1},u_{2},1,0)+\tilde{U}(u_{1},u_{2},0,1)
\bigr)
\\
&=\frac{1}{f(u_{1}+u_{2})} \biggl( \frac{\tilde{U}(u_{1})}{u_{1}}+\frac{
\tilde{U}(u_{2})}{u_{2}} \biggr) .
\end{align*}
In the above formula $\tilde{U}(u_{1},u_{2},1,0)$ is the Laplace transform
of $U(t_{1},t_{2},1,0)=\mathsf{E}Y(t_{1})=U(t_{1})$ and $\tilde{U}%
(u_{1},u_{2},0,1)$ is the Laplace transform of
$U(t_{1},t_{2},0,1)=\mathsf{E}%
Y(t_{2})=U(t_{2})$.

The inverse Laplace transform can be found by the following calculations:
\begin{align}
&U(t_{1},t_{2},1,1)\notag\\
&\quad=\mathcal{L}^{-1} \bigl(
\tilde {U}(u_{1},u_{2},1,1) \bigr) (t_{1},t_{2})\notag\\
&\quad=\int_{0}^{t_{1}}\int_{0}^{t_{2}}
\bigl[ U(t_{1}-\tau_{1})+U(t_{2}-\tau
_{2})%
 \bigr] \mathcal{L}^{-1} \biggl(
\frac{1}{f(u_{1}+u_{2})} \biggr) (\tau _{1},\tau_{2})d
\tau_{1}d\tau_{2}; \label{*}
\end{align}
for the function
\[
\frac{1}{f(u_{1}+u_{2})}=\frac{\beta+u_{1}+u_{2}}{\lambda
(u_{1}+u_{2})}=%
\frac{\beta}{\lambda}
\frac{1}{u_{1}+u_{2}}+\frac{1}{\lambda}
\]
we write the inverse Laplace transform in the form
\begin{equation}
\mathcal{L}^{-1} \biggl( \frac{1}{f(u_{1}+u_{2})} \biggr) (
\tau_{1},\tau _{2})d\tau_{1}d\tau_{2}=
\frac{\beta}{\lambda}\delta(\tau_{1}-\tau _{2})d
\tau_{1}d\tau_{2}+\frac{1}{\lambda}\delta(
\tau_{1})\delta (\tau _{2})d\tau_{1}d
\tau_{2}, \label{**}
\end{equation}
and continue calculations inserting (\ref{**}) in (\ref{*}):
\begin{align*}
U(t_{1},t_{2},1,1) &=\frac{\beta}{\lambda}\int
_{0}^{t_{1}}\int_{0}^{t_{2}}%
 \bigl[ U(t_{1}-\tau_{1})+U(t_{2}-
\tau_{2}) \bigr] \delta(\tau _{1}-\tau _{2})d
\tau_{1}d\tau_{2}
\\
&\quad+\frac{1}{\lambda}\int_{0}^{t_{1}}\int
_{0}^{t_{2}} \bigl[ U(t_{1}-\tau
_{1})+U(t_{2}-\tau_{2}) \bigr] \delta(
\tau_{1})\delta(\tau _{2})d\tau _{1}d
\tau_{2}
\\
&=\frac{\beta}{\lambda}\int_{0}^{t_{1}\wedge t_{2}} \bigl[
U(t_{1}-\tau) +U(t_{2}-\tau) \bigr] d\tau+
\frac{1}{\lambda
} \bigl( U(t_{1})+U(t_{2}) \bigr) .
\end{align*}

Therefore, for the covariance of the process $Y(t)$ we obtain the following
expression:
\begin{align*}
\mathsf{Cov} \bigl( Y(t_{1}),Y(t_{2}) \bigr) &=
\frac{\beta}{\lambda}%
\int_{0}^{t_{1}\wedge t_{2}} \bigl[
U(t_{1}-\tau)d\tau+U(t_{2}-\tau ) \bigr] d\tau
\\
&\quad+\frac{1}{\lambda} \bigl( U(t_{1})+U(t_{2}) \bigr)
-U(t_{1})U(t_{2}).
\end{align*}


Using the expression for $U(t)=\frac{1}{\lambda}(\beta t+1)$, we
find:
\begin{align*}
\mathsf{E} \bigl( Y(t_{1})Y(t_{2}) \bigr)
=U(t_{1},t_{2},1,1)&=\frac
{\beta
}{\lambda^{2}}\bigl(\beta (
t_{1}+t_{2} ) +2\bigr)\min(t_{1},t_{2})
\\
&\quad-\frac{\beta^{2}}{\lambda^{2}} \bigl( \min(t_{1},t_{2}) \bigr)
^{2}+%
\frac{1}{\lambda^{2}}\bigl(\beta ( t_{1}+t_{2}
) +2\bigr)
\end{align*}
and
\begin{align*}
&\mathsf{Cov} \bigl( Y(t_{1}),Y(t_{2}) \bigr)\\
&\quad=\mathsf{E}
\bigl[ Y(t_{1})Y(t_{2}) \bigr] -\mathsf{E}Y(t_{1})
\mathsf{E}Y(t_{2})
\\
&\quad=\frac{\beta}{\lambda^{2}}\bigl(\beta ( t_{1}+t_{2} ) +2\bigr)
\min (t_{1},t_{2})-\frac{\beta^{2}}{\lambda^{2}} \bigl( \min
(t_{1},t_{2}) \bigr) ^{2}-\frac{\beta^{2}}{\lambda
^{2}}t_{1}t_{2}+
\frac{1}{%
\lambda^{2}}
\\
&\quad=\frac{1}{\lambda^{2}}\bigl(2\beta\min(t_{1},t_{2})+1
\bigr).
\end{align*}

\subsection{{Marginal distribution and moments of the
process $Y^{(n)}(t)$}}\label{appendixa.2}

We present some details of the derivation of the expression for
probability density of the
inverse Poisson--Erlang process $Y^{(n)}(t)$ introduced by the formula
(\ref{Yn}) in Section~\ref{section5.2}.

The inverse Poisson--Erlang process was considered in \cite{PV} and its
probability
density function (p.d.f.) was presented in Theorem 3.1 therein.
However, in \cite{PV} the different parametrization of the
Poisson--Erlang process was used in comparison with that used in
our paper.

For convenience of a reader and to make the paper
self-contained,
we present here some details of calculations following the general
approach developed in \cite{Cox}.

Introduce the Laplace transforms related to the process $
G_{N}^{(n)}(t)$ and its inverse process $Y^{(n)}(t)$:
\begin{align*}
^{*}l(v,t)&=\mathsf{E}e^{-v G_{N}^{(n)}(t)};\\
^{*}l^{*}(v,u)&=\int_{0}^{\infty}
{^{*}l}(v,t)e^{-ut}dt
\end{align*}
and
\begin{align*}
^{*}q(v,t)&=\mathsf{E}e^{-v Y^{(n)}(t)};\\
^{*}q^{*}(v,u)&=\int_{0}^{\infty}
{ ^{*}q}(v,t)e^{-ut}dt.
\end{align*}
Then the following relation holds (see \cite{Cox}, Section~8.4,
formula (3)):
\begin{eqnarray}
^{*}q^{*}(v,u)= \frac{1-v\, ^{*}l^{*}(u,v)}{u}. \label{A. 1}
\end{eqnarray}

The above formula holds, in fact, for more general compound
Poisson processes. In the case of compound Poisson process with
jumps having the p.d.f. $g(x)$, it is possible to write the exact
expression for $^{*}l^{*}(v,u)$ and formula (\ref{A. 1}) takes the
form:\vadjust{\goodbreak}
\begin{eqnarray}
^{*}q^{*}(v,u)= \frac{\hat{f}(v) (1-\hat{g}(u) )}{u (1-\hat{g}(u)\hat
{f}(v) )}, \label{A.2}
\end{eqnarray}
where $f(x)$ is the p.d.f. of exponential distribution,
$\hat{f}(v)$ and $\hat{g}(u)$ are the Laplace transforms of $f$
and $g$ correspondingly (see formula (4) Section~8.4 in
\cite{Cox}).

For the case of Poisson--Erlang process, $f(x)$ is the p.d.f. of
exponential distribution and $g(x)$ is the p.d.f. of Erlang
distribution. Inserting the expressions for $\hat{f}(v)$ and
$\hat{g}(u)$ in (\ref{A.2}) we finally obtain:
\begin{eqnarray}
^{*}q^{*}(v,u)= \frac{\lambda ( (\beta+u )^n-\beta^n
)}{u (\lambda+v ) (\beta+u )^n-\lambda\beta^n}. \label{A.3}
\end{eqnarray}

One special case when inversion of (\ref{A.3})
can be easily performed was considered in \cite{Cox}, namely, the
case when $f$ and $g$ are both exponential, and in such a case we
come to the p.d.f. of inverse compound Poisson-exponential process
(see formula (\ref{Ydensity}) in Section~\ref{section5.1}).
The exact result is also available for the inverse Poisson--Erlang
process. This result
was stated in Theorem 3.1 of \cite{PV}, for its proof the
inverse Laplace transforms for \eqref{A.3} were calculated
consequently with respect to variables $v$ and $u$.

Here we present a reverse check, namely, we check that the double
Laplace transform of the p.d.f. (\ref{hYn}) gives the expression
(\ref{A.3}). We have:
\begin{align}
^{*}q(v,t)&= \int_{0}^{\infty}e^{-vs}
\lambda e^{-\beta t - \lambda s} \sum_{k=1}^{n} (
\beta t )^{k-1}\sum_{m=0}^{\infty}
\frac{
(\lambda s  (\beta t  )^n )^m}{m! \varGamma(nm+k)}ds
\nonumber
\\
&=\lambda e^{-\beta t}\sum_{k=1}^{n}
(\beta t )^{k-1} \sum_{m=0}^{\infty}
\frac{  (\beta t  )^{nm}\lambda^m}{m! \varGamma
(nm+k)}\int_{0}^{\infty}e^{-(v+\lambda)s}s^m
ds
\nonumber
\\
&=e^{-\beta
t}\sum_{k=1}^{n}\sum
_{m=0}^{\infty}\frac{ ( \beta t )
^{nm+k-1}}{%
\varGamma(nm+k)} \biggl(
\frac{\lambda}{\lambda+v } \biggr) ^{m+1} \label{A.4}
\end{align}
and
\begin{align*}
^{*}q^{*}(v,u)&= \biggl( \frac{\lambda}{\lambda+v } \biggr)
^{m+1}\int_{0}^{\infty}e^{-\beta
t}\sum
_{k=1}^{n}\sum_{m=0}^{\infty}
\frac{ ( \beta t )
^{nm+k-1}}{(nm+k-1)!}e^{-ut}dt
\\
&=\sum_{k=1}^{n}\sum
_{m=0}^{\infty}\frac{ ( \beta )
^{nm+k-1}}{(nm+k-1)!} \biggl(
\frac{\lambda}{\lambda+v } \biggr) ^{m+1}\int_{0}^{\infty}e^{-(\beta+u)t}t^{nm+k-1}dt
\nonumber
\\
&=\sum_{k=1}^{n}\frac{\lambda}{v+\lambda}
\frac{\beta^{k-1}}{
(\beta+u )^k}\sum_{m=0}^{\infty} \biggl(
\frac{\lambda\beta^n}{
(v+\lambda ) (\beta+u )^n} \biggr)^m
\nonumber
\\
&=\frac{\lambda
 ( (\beta+u )^n-\beta^n )}{u (\lambda+v
) (\beta+u )^n-\lambda
\beta^n},
\end{align*}
which coincides indeed with (\ref{A.3}).

We next obtain the expressions for the moments of the process
$Y^{(n)}(t)$. Using the known form of the probability density of
the process $Y^{(n)}(t)$, we obtain:
\begin{align*}
\mathsf{E}Y^{(n)}(t) &=\int_{0}^{\infty}s
\lambda e^{-\beta t-\lambda
s}\sum_{k=1}^{n}(
\beta t)^{k-1}\varPhi\bigl(n,k,\lambda s(\beta t)^{n}\bigr)
\\
&=\int_{0}^{\infty}s\lambda e^{-\beta t-\lambda s}\sum
_{k=1}^{n}(\beta t)^{k-1}\sum
_{l=0}^{\infty}\frac{ ( \lambda s(\beta t)^{n} )
^{l}}{%
l!\varGamma(nl+k)}ds
\\
&=\lambda e^{-\beta t}\sum_{k=1}^{n}(
\beta t)^{k-1}\sum_{l=0}^{\infty
}%
\frac{ ( \lambda(\beta t)^{n} ) ^{l}}{l!\varGamma(nl+k)}%
\int_{0}^{\infty}e^{-\lambda s}s^{l+1}ds
\\
&=\lambda e^{-\beta t}\sum_{k=1}^{n}(
\beta t)^{k-1}\sum_{l=0}^{\infty
}%
\frac{ ( \lambda(\beta t)^{n} ) ^{l}\varGamma(l+2)}{l!\varGamma
(nl+k)\lambda^{l+2}}
\\
&=\frac{1}{\lambda}e^{-\beta t}\sum_{k=1}^{n}(
\beta t)^{k-1}\mathcal {E}%
_{n,k}^{2}
\bigl( (\beta t)^{n} \bigr) ,
\end{align*}
and, analogously, for the moment of the general order $p\geq1$:
\begin{align*}
\mathsf{E} \bigl[ Y^{(n)}(t) \bigr] ^{p} &=\int
_{0}^{\infty
}s^{p}\lambda e^{-\beta t-\lambda s}
\sum_{k=1}^{n}(\beta t)^{k-1}
\varPhi\bigl(n,k,\lambda s(\beta t)^{n}\bigr)
\\
&=\int_{0}^{\infty}s^{p}\lambda
e^{-\beta t-\lambda s}\sum_{k=1}^{n}(\beta
t)^{k-1}\sum_{l=0}^{\infty}
\frac{ ( \lambda s(\beta t)^{n} )
^{l}}{%
l!\varGamma(nl+k)}ds
\\
&=\lambda e^{-\beta t}\sum_{k=1}^{n}(
\beta t)^{k-1}\sum_{l=0}^{\infty
}%
\frac{ ( \lambda(\beta t)^{n} ) ^{l}}{l!\varGamma(nl+k)}%
\int_{0}^{\infty}e^{-\lambda s}s^{l+p}ds
\\
&=\frac{p!}{\lambda^{p}}e^{-\beta t}\sum_{k=1}^{n}(
\beta t)^{k-1}\mathcal{E%
}_{n,k}^{p+1}
\bigl( (\beta t)^{n} \bigr) .
\end{align*}
To find the moments we can also use the moment generating function of $%
Y^{(n)}(t)$:
\[
\mathsf{E}\bigl[e^{\theta Y^{(n)}(t)}\bigr]=e^{-\beta t}\frac{\lambda}{\lambda
-\theta}\sum
_{k=1}^{n} ( \beta t ) ^{k-1}
\mathcal {E}_{n,k} \biggl( \frac{\lambda ( \beta t ) ^{n}}{\lambda-\theta} \biggr)
\]
(with argument $\theta<\lambda)$; differentiating with respect to
$\theta$
and taking the derivative at $\theta=0$, we obtain the expectation of $
Y^{(n)}(t)$ in the following form:
\begin{align*}
\mathsf{E}Y^{(n)}(t) &=\frac{e^{-\beta t}}{\lambda}\sum
_{k=1}^{n}%
\sum
_{j=0}^{\infty}\frac{(j+1) ( \beta t ) ^{nj+k-1}}{\varGamma
(nj+k)%
}
\\
&=\frac{e^{-\beta t}}{\lambda}\sum_{k=1}^{n} (
\beta t ) ^{k-1}\sum_{j=0}^{\infty}
\frac{j ( \beta t )
^{nj}}{\varGamma(nj+k)} +\frac{e^{-\beta t}}{\lambda}\sum_{k=1}^{n}
( \beta t ) ^{k-1}%
\mathcal{E}_{n,k} \bigl( ( \beta
t ) ^{n} \bigr) .
\end{align*}
Using some calculations from \cite{PV} (see proof of Lemma 3.11
therein), we
can represent the first term in the above expression in the following form:
\[
\frac{\beta t}{n\lambda}-\frac{e^{-\beta t}}{n\lambda}\sum_{k=1}^{n}
( k-1 ) ( \beta t ) ^{k-1}\mathcal{E}_{n,k} \bigl( ( \beta t
) ^{n} \bigr) ,
\]
and, therefore, we obtain
\[
\mathsf{E}Y^{(n)}(t)=\frac{\beta t}{n\lambda}+\frac{e^{-\beta
t}}{n\lambda}%
\sum_{k=1}^{n} ( n-k+1 ) ( \beta t )
^{k-1}\mathcal {E}%
_{n,k} \bigl( ( \beta t )
^{n} \bigr) ;
\]
and then we can use the asymptotic relation
\[
\mathcal{E}_{n,k} \bigl( ( \beta t ) ^{n} \bigr) \sim
\frac
{1}{n}%
 ( \beta t ) ^{1-k}e^{\beta t}\quad\text{as}\ t\rightarrow \infty,
\]
to come to the following asymptotics for $\mathsf{E}Y^{(n)}(t)$:
\[
\mathsf{E}Y^{(n)}(t)\sim\frac{\beta t}{n\lambda}+\frac{n+1}{2n\lambda
}%
\quad\text{as}\ t\rightarrow\infty.
\]

\end{document}